\documentclass{amsart}
\usepackage{amsmath, amssymb, amsthm, overpic}

\usepackage[margin=1.25in]{geometry}
\newcommand{\Ls}{\mathcal{L}}
\newcommand{\I}{\mathcal{I}}
\DeclareMathOperator{\Teich}{Teich}
\newcommand{\ms}{\mathcal{M}}
\newcommand{\st}{\;|\;}
\newcommand{\N}{\mathbb{N}}

\newcommand{\Z}{\mathbb{Z}}
\DeclareMathOperator{\Isom}{Isom}

\DeclareMathOperator{\arcsinh}{arcsinh}
\DeclareMathOperator{\Ends}{Ends}

\newtheorem{thm}{Theorem}[section]
\newtheorem{lemma}[thm]{Lemma}

\newtheorem{prop}[thm]{Proposition}

\newtheorem{rmk}[thm]{Remark}

\newtheorem{theorintro}{Theorem}

\newtheorem*{thm:finitediscretesunada}{Theorem \ref{thm:finitediscretesunada}}
\newtheorem*{thm:finitegroup}{Theorem \ref{thm:finitegroup}}

\usepackage[most]{tcolorbox}
\definecolor{mygreen}{RGB}{0, 160, 0}
\usepackage{hyperref} 
\hypersetup{colorlinks=true,
allcolors=mygreen}

\title[Isospectrality and isometry groups]{Isospectrality and isometry groups  for infinite-type hyperbolic surfaces with discrete length spectrum}
\date{\today}

\subjclass[2020]{57K20, 53C22, 58J50}

\author{Federica Fanoni}
\email{federica.fanoni@u-pec.fr}
\address{CNRS, Univ Paris Est Creteil, Univ Gustave Eiffel, LAMA UMR8050, F-94010 Creteil, France}
\author{David Fisac}
\email{david.fisac-camara@cnrs.fr}
\address{Univ Paris Est Creteil, Univ Gustave Eiffel, CNRS, LAMA UMR8050, F-94010 Creteil, France}
\begin{document}

\begin{abstract}
We study infinite-type hyperbolic surfaces with discrete length spectrum. In this setup,
 we show that Sunada's method can only produce finite isospectral families, but that there is no bound on the cardinality of an isospectral family (for any infinite-type surface without planar ends). We then prove that, given any infinite-genus surface satisfying an additional topological assumption, any finite group can be realized as isometry group of a hyperbolic structure with discrete length spectrum.
\end{abstract}

\maketitle

\section{Introduction}

Given a hyperbolic surface, a classical invariant is its \emph{length spectrum}, that is, the multi-set of lengths of closed geodesics, counted with multiplicities. A well-known question is which aspects of the hyperbolic structure are detected by the length spectrum. Said differently, how similar are \emph{isospectral} hyperbolic surfaces (hyperbolic surfaces with the same length spectrum)?

In the closed case, it was conjectured by Gel'fand \cite{gelfand_automorphic} that the length spectrum completely determines the hyperbolic structure. The conjecture was first disproved by Vignéras \cite{vigneras_varietes}, and since then lots of work has been put into constructing isospectral families (see Gordon's survey \cite{Gordon_Survey} for more details and references). Nonetheless, the length spectrum does determine the topology, as shown by Huber \cite{huber_analytischen}. Moreover, Wolpert \cite{wolpert_length} proved that a \emph{generic} surface is determined by its spectrum. Furthermore, there is a bound on the size of an isospectral family which only depends on the topology of the surface --- see the work of Buser \cite[Chapter 13]{buser_geometry} and Parlier \cite{parlier_interrogating}. Also the \emph{simple} length spectrum (where we consider only lengths of simple closed geodesics) generically determines the hyperbolic structure (see \cite{BCK_Simple}), and analogous results have been shown for the orthospectrum and the simple orthospectrum (see \cite{MM_Systoles}, \cite{LeQuellec_Orthospectrum}).

The case of surfaces of infinite type is strikingly different. In particular, it was shown by the first author in \cite{Selfcite_Isospectral} that, for infinite-type surfaces without planar ends, there is no upper bound to the size of isospectral families. If the surface satisfies an additional topological condition (its space of ends is self-similar --- see Section \ref{sec:preliminaries}), there are isospectral families which are (uncountably) infinite.

Another aspect which clearly sets length spectra of finite- and infinite-type surfaces apart is discreteness. We say that a surface has \emph{discrete} length spectrum if, for any \(L>0\), there are finitely many closed geodesics of length at most \(L\). While the length spectrum of a finite-type hyperbolic surface is always discrete, in the infinite-type case it is very easy to construct structures with non-discrete length spectrum --- it is for instance enough to give the same length to all curves in a pants decomposition --- while it is harder to construct structures with discrete length spectrum. This is done by Basmajian and Kim in \cite{BK_Geometrically}: they show that on any infinite-type surface there are infinite-dimensional parameter spaces of hyperbolic structures with discrete (respectively, non-discrete) length spectrum.

The constructions of isospectral families in \cite{Selfcite_Isospectral} all yield hyperbolic surfaces with non-discrete length spectrum, and the goal of this paper is to understand what happens if we require the spectrum to be discrete .

Before stating our results, recall that almost all known constructions of isospectral families can be brought back to a criterion by Sunada \cite{Sun_AlmostConjugate}:

\begin{thm}[\cite{Sun_AlmostConjugate}]\label{thm:sunada}
	Let \(X\) be a complete hyperbolic surface, and \(G\) a group of isometries with finitely many fixed points. For \(H_1,H_2\) two almost conjugate subgroups of \(G\) acting without fixed points, the quotient surfaces \(X/H_1\) and \(X/H_2\) are isospectral.
\end{thm}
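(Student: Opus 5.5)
The idea is to follow Sunada's original argument. We match closed geodesics on the two quotients through conjugacy classes of \(G\) acting on coset spaces. Almost conjugacy means that for every conjugacy class \(C\) of \(G\) we have \(|C\cap H_1|=|C\cap H_2|\); equivalently, the permutation representations \(\mathbb{C}[G/H_1]\) and \(\mathbb{C}[G/H_2]\) have the same character. First set-up step: since the fixed-point set of \(G\) is finite and \(H_i\) acts without fixed points, each \(H_i\) acts freely and properly discontinuously on \(X\). Hence \(X/H_i\) is a complete hyperbolic surface and \(X\to X/H_i\) is a covering. We may assume \(G\) finite, which is the setting in which Sunada's criterion is used; then all these coverings are finite. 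We work with the intermediate quotient \(Y=X/G\), an orbifold, and with the projections \(X/H_i\to Y\).

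Key step 1: classify closed geodesics of \(X/H_i\) lying over a fixed primitive closed geodesic \(\gamma\) of \(Y\). Here \(\gamma\) should avoid the finitely many cone points, which is automatic for geodesics lifting to the manifolds \(X/H_i\); geodesics through cone points are handled separately. Fix a lift \(\tilde\gamma\subset X\) and a basepoint on it. Going once around \(\gamma\) determines a \(g\in G\), well defined up to conjugacy, translating along \(\tilde\gamma\) by \(\ell(\gamma)\) up to the stabilizer. The closed geodesics of \(X/H_i\) over \(\gamma\) correspond to the orbits of \(\langle g\rangle\) on \(G/H_i\). An orbit of size \(k\) gives a closed geodesic of length \(k\,\ell(\gamma)\), and the primitive or non-primitive bookkeeping is carried by the orbit sizes.

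Key step 2: show that the multiset of orbit sizes of \(\langle g\rangle\) on \(G/H_1\) and on \(G/H_2\) coincide. The number of fixed points of \(g^k\) on \(G/H_i\) equals the character \(\chi_i(g^k)=|C_{g^k}\cap H_i|\cdot|Z(g^k)|/|H_i|\). Almost conjugacy gives \(|H_1|=|H_2|\), since we sum over all classes. Hence \(\chi_1(g^k)=\chi_2(g^k)\) for all \(k\). The number of orbits of size exactly \(k\) is then obtained from the fixed-point counts \(\#\mathrm{Fix}(g^d)\), \(d\mid k\), by Möbius inversion. So the two counts agree. Summing over all primitive \(\gamma\) in \(Y\) gives equal multisets of lengths, so \(X/H_1\) and \(X/H_2\) are isospectral.

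The main obstacle is step 1 in the presence of orbifold points and nontrivial stabilizers of \(\tilde\gamma\) in \(G\). The element \(g\) is defined only up to the pointwise stabilizer of \(\tilde\gamma\), which is trivial because only finitely many points are fixed and \(g\) is an isometry. A geodesic could instead be reversed by an involution, so \(\gamma\) in \(Y\) would be a reflected or "folded" geodesic. To handle this, I would pass to unoriented versus oriented geodesics. I would also check that the stabilizer of \(\tilde\gamma\) in \(G\) is cyclic or dihedral, and in the dihedral case count via the infinite dihedral group acting on \(G/H_i\) using only characters of elements. These characters are again equal by almost conjugacy, so the same Möbius/orbit-counting argument goes through.
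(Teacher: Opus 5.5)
Your orbit-counting argument is the standard proof of Sunada's theorem, and it is correct when \(\mathrm{Stab}_G(\tilde\gamma)\) is cyclic. Two points need repair.

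First, ``we may assume \(G\) finite'' is not a reduction. The statement allows arbitrary \(G\), and the paper even says finiteness can be dropped. Your Step 2 uses finiteness at every turn: \(|H_1|=|H_2|\), the formula \(\chi_i(g)=|C_g\cap H_i|\,|Z(g)|/|H_i|\), and finite fixed-point sets to M\"obius-invert. You should nevertheless keep finiteness as an explicit hypothesis rather than try to remove it, because with the paper's cardinality definition of almost conjugacy the infinite case is false. Take the regular cover \(X\to\Sigma\) of a closed genus-\(2\) surface with deck group \(G=F_2\) (via a surjection \(\pi_1(\Sigma)\to F_2\)). Take a surjection \(\phi\colon F_2\to S_4\), and set \(H_1=\phi^{-1}(V_4)\) and \(H_2=\phi^{-1}(\langle(12)(34)\rangle)\). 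For \(g\neq 1\), \(C_g\cap H_i\) is empty unless \(\phi(g)\in V_4\). In that case it is countably infinite for both \(i\): conjugate by \(\ker\phi\subseteq H_i\), a free group of rank \(25\) in which centralizers are cyclic. So \(H_1,H_2\) are almost conjugate and \(G\) acts freely. But \(X/H_1\) and \(X/H_2\) are closed surfaces of genus \(7\) and \(13\) (indices \(6\neq 12\)), hence not isospectral by Huber's theorem. The paper only applies the result with \(G=H^n\) finite, so your scope is the relevant one. If \(G\) is infinite but the \(H_i\) have finite index and equal permutation characters, quotienting by the common normal core \(K\subseteq H_1\cap H_2\) (which acts freely) reduces to your case.

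Second, within the finite case, your claim that geodesics lifting to \(X/H_i\) automatically avoid cone points is false. Freeness of \(H_i\) rules out fixed points only for elements conjugate into \(H_i\). Closed geodesics of \(X/H_i\) through images of fixed points of other elements of \(G\) project through cone points of \(Y\). Do the bookkeeping upstairs instead. Take a primitive closed geodesic \(\tilde\gamma\subset X\), with \(D=\mathrm{Stab}_G(\tilde\gamma)\) and translation subgroup \(T=\langle t\rangle\leq D\). The closed geodesics of \(X/H_i\) covered by \(G\cdot\tilde\gamma\) correspond to the \(D\)-orbits on \(G/H_i\), the orbit of \(yH_i\) giving length \(\frac{\ell(\tilde\gamma)}{|T|}\,|T\cdot yH_i|\). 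Cone points of order \(\geq 3\) then play no role.

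When \(D\) is dihedral, equality of characters does not by itself settle the count, since permutation characters of dihedral groups do not determine orbit structure in general. For the Klein four-group, \(V_4/1\sqcup 2\,V_4/V_4\) and \(V_4/\langle a\rangle\sqcup V_4/\langle b\rangle\sqcup V_4/\langle ab\rangle\) have the same character. The missing observation is that every half-turn \(a\in D\) acts freely on \(G/H_i\): \(axH_i=xH_i\) would put \(x^{-1}ax\), which has a fixed point in \(X\), inside \(H_i\). Hence point stabilizers in \(D\) lie in \(T\), and each \(D\)-orbit is the union of two \(T\)-orbits of equal size. So the number of geodesics of length \(k\,\ell(\tilde\gamma)/|T|\) is half the number of \(T\)-orbits of size \(k\), which your M\"obius inversion on \(\chi_i(t^d)\) does determine.

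With these two repairs you have a complete proof of the finite-\(G\) statement. The paper itself gives no proof; it only cites Sunada and the Buser--B\'erard transplantation technique.
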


Note that Theorem \ref{thm:sunada} was originally stated for finite groups \(G\) and compact manifolds, but both the finiteness and compactness hypotheses can be dropped --- see \cite[Chapter 10]{buser_geometry} and the \emph{transplantation of geodesics} technique from Buser \cite{Buser_Isospectral} and Bérard \cite{berard_transplantation}.

The infinite isospectral families in \cite{Selfcite_Isospectral} are constructed using Theorem \ref{thm:sunada}. We start by asking if infinite isospectral families can be constructed in the same way in the case of surfaces with discrete length spectrum. Our first result is a negative answer to this question.

\begin{theorintro}\label{thm:finitediscretesunada}
Let \(X\) be a hyperbolic surface and \(G\leq\mathrm{Isom}^+(X)\) acting on it. Let \(\{H_i\}_{i\in I}\) be a family of almost conjugate subgroups of \(G\) without fixed points, and consider the family of hyperbolic surfaces \(\{Y_i=X/H_i\}_{i\in I}\). If the \(\Ls(Y_i)\) are discrete, then \(I\) is finite.
\end{theorintro}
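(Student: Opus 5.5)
The plan has three steps: force finite index, bound the index uniformly, then show there are only finitely many possible subgroups. Throughout I use the standing assumption that \(G\) acts properly discontinuously, so that \(Y_i\to X/G\) is an orbifold covering. I also assume \(X\) is non-elementary, so it has a closed geodesic \(c\), and that the \(H_i\) are pairwise non-conjugate. This last assumption is harmless: conjugate subgroups give isometric quotients, and the statement is about a family of distinct surfaces.

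\textbf{Step 1: each \(H_i\) has finite index in \(G\).} The \(G\)-orbit of \(c\) projects to closed geodesics of \(Y_i\). The translates \(gc\) and \(g'c\) have the same image exactly when \(H_i g\,\mathrm{Stab}_G(c)=H_i g'\,\mathrm{Stab}_G(c)\). Each image has length at most \(\ell(c)\), since the covering map is a local isometry. Because \(c\) is compact and the action is proper, \(\mathrm{Stab}_G(c)\) is finite. Discreteness of \(\Ls(Y_i)\) then makes the double coset space \(H_i\backslash G/\mathrm{Stab}_G(c)\) finite, hence \([G:H_i]<\infty\).

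\textbf{Step 2: a uniform index bound.} I would show that almost conjugate finite-index subgroups have the same index \(n\), so that every \(Y_i\to X/G\) is an orbifold covering of the same degree \(n\). To prove this, I would pass to the normal core \(N=\bigcap_i\mathrm{core}_G(H_i)\), or at least the core of finitely many of the \(H_i\). Then I would compare \(|H_i/N|\) inside the finite group \(G/N\) by summing \(|C\cap H_i|\) over the conjugacy classes \(C\) of \(G/N\). This is subtle if infinitely many cores are involved, so I would first treat finitely many \(H_i\) at a time, which suffices for showing \(|I|\) is bounded.

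\textbf{Step 3: finitely many subgroups.} Since \(Y_1\) has discrete spectrum and \(X/G\) is covered by it with finite degree, \(X/G\) also has discrete length spectrum. Each \(H_i\) is the stabiliser of a point in a transitive action of \(G\) on \(n\) points, and I want finitely many such actions up to conjugacy. I expect this to be the main obstacle, since \(G\), or the orbifold group of \(X/G\), need not be finitely generated. My first attempt is to exploit discreteness again, along the following lines.
\begin{itemize}
\item Fix \(L\) large enough that the finitely many closed geodesics of \(X/G\) of length at most \(L\) fill a compact suborbifold \(K\) containing the (finitely many, by hypothesis) cone points, with \(\pi_1^{orb}(K)\) finitely generated.
\item Outside \(K\), the complementary pieces are infinite-type subsurfaces. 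Over them, the covering is controlled by the monodromy along the boundary curves of \(K\).
\item Discreteness of \(\Ls(Y_i)\), together with the equality \(\Ls(Y_i)=\Ls(Y_j)\), should force the lifts of the short curves to be organised identically in every \(Y_i\). Only finitely many homomorphisms \(\pi_1^{orb}(K)\to S_n\) exist.
\end{itemize}
From this I would try to argue that the \(H_i\) fall into finitely many conjugacy classes once the restriction to \(K\) is fixed. The delicate point is showing that choices of monodromy far out in the ends cannot produce infinitely many non-conjugate \(H_i\) while preserving discreteness and isospectrality. If this fails, my fallback is a geometric count. Every \(Y_i\) is a degree-\(n\) cover of the fixed orbifold \(X/G\), with the same number of closed geodesics of length at most \(L\) for every \(L\). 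I would then try to show that the isometry class of \(Y_i\) is determined by a finite amount of combinatorial data, namely the covering restricted to a sufficiently large compact core.
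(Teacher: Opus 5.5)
Your Step 1 is correct and is the paper's argument in the non-discrete case: a closed geodesic has finite stabiliser, and $H_i\backslash G/\mathrm{Stab}_G(c)$ is finite. It even works uniformly, so you do not need the paper's appeal to Basmajian--Kim when $\Ls(X)$ is discrete.

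The genuine gap is Step 3. It is a programme rather than a proof, and the route it proposes cannot work as sketched. Discreteness carries no information beyond finite index. Once one $Y_i$ has discrete spectrum, so does $X/G$ (as you note). Then $X/H$ has discrete spectrum for \emph{every} finite-index $H\le G$, whatever its monodromy far out in the ends. Since $G$ need not be finitely generated, there can be infinitely many subgroups of index $n$, all with discrete-spectrum quotients. So no argument combining discreteness with a compact core can bound the family. Finiteness must come from almost conjugacy itself, which Step 3 never actually uses; isospectrality only appears there as a hope.

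The missing idea is the paper's Lemma 3.1: almost conjugate subgroups have a \emph{common} normal core. It is purely group-theoretic and makes your Steps 2 and 3 unnecessary. Almost conjugacy gives equal permutation characters $g\mapsto|\mathrm{Fix}_{G/H_i}(g)|$; for finite $G$ this is equivalent to the paper's definition. Evaluating at $e$ gives a common index $n$. The kernel of the action on $G/H_i$ is $\{g : |\mathrm{Fix}_{G/H_i}(g)|=n\}=\bigcap_{x\in G}xH_ix^{-1}$, so it is the same normal subgroup $K$ for every $i$. There is therefore only one core, not infinitely many to intersect. Since $G/K$ embeds in the symmetric group on $n$ letters, $K$ has finite index. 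Each $H_i\supseteq K$ corresponds to a subgroup of the finite group $G/K$, so there are only finitely many $H_i$.

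Your worry in Step 2 is still legitimate for infinite $G$. When conjugacy classes are infinite, the cardinalities $|C_g\cap H_i|$ need not determine index or core. For example, take a surjection $F_2\to S_4$. The preimages of $\langle(12)(34)\rangle$ and of the Klein four-group meet each conjugacy class of $F_2$ in sets of equal cardinality (empty, one point, or countably infinite), yet have indices $12$ and $6$. So for infinite $G$ one must use almost conjugacy in the permutation-character form. That is what the paper's Lemma 3.1 implicitly does when it invokes conjugate permutation representations.
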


On the other hand, we can show (using Sunada's criterion) that one can construct arbitrarily large isospectral families of hyperbolic surfaces with discrete length spectrum: 

\begin{theorintro}\label{thm:largefamilies}
Let \(S\) be an infinite-genus surface with no planar ends. 
Then for every positive integer \(n\), there is a hyperbolic structure \(X\) on \(S\) with discrete length spectrum and such that \(|\I(X)|\geq n\).
\end{theorintro}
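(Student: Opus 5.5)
The plan is to realize \(X\) as a quotient, via Sunada's criterion (Theorem \ref{thm:sunada}), of a cover carrying a finite group action with many almost conjugate subgroups. Here \(\I(X)\) is read as the set of isometry classes of hyperbolic surfaces homeomorphic to \(S\) and isospectral to \(X\).

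\textbf{Algebraic input.} Fix a finite group \(G\) that contains \(n\) pairwise almost conjugate, pairwise non-conjugate subgroups \(H_1,\dots,H_n\). Such families exist for every \(n\), for instance among the examples of Gordon and of Buser built from \(\mathrm{PSL}\)- or \((\Z/p)^k\)-type semidirect products, and I will quote one. By building in the topology appropriately we may assume every \(H_i\) acts freely.

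\textbf{Topological and geometric construction.} Build a hyperbolic surface \(Z\) with a free action of \(G\), using a Cayley-graph construction. Take a compact building block \(B\) of genus \(\ge 1\) with boundary, one block for each element of \(G\), and glue the blocks along the Cayley graph with respect to generators. Then attach, \(G\)-equivariantly, copies of an infinite-type piece \(R\). Choose \(R\) so that each quotient \(Z/H_i\) has end space homeomorphic to that of \(S\) and has infinite genus.

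The key point is that the quotient by \(H_i\) sees \(|G:H_i|\) copies of \(R\) attached to a compact core. So it suffices to find \(R\) such that a finite disjoint union of copies of \(R\), glued to a compact surface of positive genus, is homeomorphic to \(S\). For this, decompose \(S\) itself: since \(S\) has infinite genus and no planar ends, we can write \(S\) as a compact core with boundary whose complement consists of \(m\) unbounded components, and let these components be the different \(R\)'s, equivariantly repeated. The index \(|G:H_i|\) is the same for all \(i\), since almost conjugate subgroups have the same order. Using the classification of infinite-type surfaces (genus, ends, and the nonplanar ends in the end space), we then arrange that the resulting quotient is homeomorphic to \(S\).

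Choose the hyperbolic metric on the pieces \(R\) with discrete length spectrum, following the Basmajian--Kim construction: pants curve lengths tending to infinity along the exhaustion, with twists chosen so that only finitely many geodesics have length at most \(L\). The compact core is finite, so on \(Z\) the length spectrum is also discrete, and hence so is the spectrum of each quotient \(Y_i=Z/H_i\). By Theorem \ref{thm:sunada}, the \(Y_i\) are pairwise isospectral. Set \(X=Y_1\).

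\textbf{Main obstacle: non-isometry.} I expect the hardest step to be showing that the \(Y_i\) are pairwise non-isometric, so that \(|\I(X)|\ge n\). The plan is to make the blocks generic, meaning all lengths are distinct and no block has extra symmetries. Then \(\mathrm{Isom}(Z)=G\), and any isometry \(Y_i\to Y_j\) lifts to an isometry of \(Z\) normalizing appropriately, which would force \(H_i\) and \(H_j\) to be conjugate in \(G\), contradicting the choice of the \(H_i\). This lifting requires the cover \(Z\to Y_i\) to be characteristic in a suitable sense. I would instead argue directly: break the symmetry by using a distinguished set of short geodesics, the systoles in the blocks. Any isometry must preserve this set, so it lifts to a deck-type map, giving conjugacy.
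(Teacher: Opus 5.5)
\textbf{Gap: the topological step cannot produce $S$ in general.} In your construction $Z/H_i$ is a compact core with $k=[G:H_i]$ copies of each piece attached to a single block. So $(\Ends(Z/H_i),\Ends_g(Z/H_i))$ is $k$ disjoint copies of the end pair of those pieces. Here $k\geq 2$, because non-conjugate almost conjugate subgroups have equal order and are therefore proper. Each piece $R$ is non-compact, so it has at least one end, and every quotient therefore has at least $k\geq 2$ ends. The Loch Ness monster (infinite genus, one end, no planar ends) is covered by the theorem, and your construction can never yield it. The same goes for any surface whose end pair does not split into $k$ homeomorphic clopen pieces, for instance one with finitely many ends, in number not divisible by $k$.

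Your proposed remedy is to take all complementary components of a compact core of $S$ and repeat them equivariantly. This gives $k$ copies of $(\Ends(S),\Ends_g(S))$, which is homeomorphic to $(\Ends(S),\Ends_g(S))$ only under a self-duplicating-type hypothesis. That is the setting of Theorem~\ref{thm:finitegroup} and of the remark after its proof, not the generality of Theorem~\ref{thm:largefamilies}.

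\textbf{The missing idea.} You need the deck group to act trivially on ends, so that no intermediate quotient multiplies the end space. The paper achieves this by taking $X$ to be a regular $G$-cover of $S$ itself:
\begin{itemize}
\item cut $S$ along infinitely many disjoint nonseparating curves $\alpha^e_m$ accumulating onto every end $e$ of a countable dense set $F\subset\Ends(S)$;
\item take $|G|$ copies of the cut surface;
\item reglue so that the monodromy across $\alpha^e_{qN+r}$ is $g_r$, so every element of $G$ occurs infinitely often near each $e\in F$.
\end{itemize}
Then $G$ fixes every end of $X$. The continuous surjections $\Ends(X)\to\Ends(X/K_i)\to\Ends(S)$ are therefore bijections of compact Hausdorff spaces, hence homeomorphisms, and each $X/K_i$ is homeomorphic to $S$ by the classification. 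Discreteness of the spectrum is simply lifted from a discrete structure on $S$ via Lemma~\ref{lem:quotient-discrete}, with no gluing estimates needed.

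\textbf{Secondary issue: non-isometry.} This step is still only a sketch. An isometry $Y_i\to Y_j$ need not lift to $Z$ unless you already know it respects the covering subgroups. The short-curve version can work, but you must actually show that blocks go to blocks with generator labels and edge orientations preserved. That would give a $G$-equivariant bijection $H_i\backslash G\to H_j\backslash G$, and hence conjugacy. The paper does this concretely: it pinches curves near one end, uses one explicit test geodesic, and appeals to Remark~\ref{rmk:groupstuff}.
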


Here \(\I(X)\) is the collection of all hyperbolic structures on \(S\) which are isospectral to \(X\).

One tool to prove Theorem \ref{thm:finitediscretesunada} is the fact that isometry groups of hyperbolic surfaces with discrete length spectrum are finite, as shown by Basmajian and Kim in \cite{BK_Geometrically}. Note that this is not the case without the discreteness assumption: an easy example is a translation-invariant hyperbolic structure on the Jacob's ladder surface (the surface without planar ends and with exactly two non-planar ends).

Aougab, Patel and Vlamis investigated in \cite{APV_Isometry} which groups can be realized as isometry groups of some hyperbolic structure on a given infinite-type surface, and proved the following (see Section \ref{sec:preliminaries} for the missing definitions):

\begin{thm}[\cite{APV_Isometry}]\label{thm:APV}
Let \(S\) be an infinite-genus surface without planar ends and \(G\) a group.
\begin{enumerate}
\item If the endspace of \(S\) is self-similar, there is a hyperbolic structure on \(S\) whose isometry group is \(G\) if and only if \(G\) is countable.
\item If the endspace of \(S\) is doubly pointed, then the isometry group of any hyperbolic structure on \(S\) is virtually cyclic.
\item If \(S\) contains a compact non-displaceable subsurface, there is a hyperbolic structure on \(S\) with isometry group \(G\) if and only if \(G\) is finite.
\end{enumerate}
\end{thm}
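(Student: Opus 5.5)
This theorem is quoted from \cite{APV_Isometry} and will be used as a black box. Still, here is how I would prove it, one item at a time. In every case the underlying tool is the same. We build a \(G\)-invariant hyperbolic structure by taking a \(G\)-equivariant pants decomposition with \(G\)-invariant lengths and twists. To make sure no extra isometries appear, we \emph{rigidify}: we assign distinct, rapidly growing lengths to curves in different orbits, or glue in asymmetric ``decorations''. Then any isometry must permute the pants in the same way some element of \(G\) does.

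For (1), necessity is immediate. Any isometry group of a hyperbolic surface acts properly discontinuously on the unit tangent bundle, and a discrete subgroup of a second countable Lie group is countable. For sufficiency, fix a countable \(G\) and a generating set, and take its Cayley graph \(\Gamma\) with finitely or countably many generators. Thicken \(\Gamma\) to a surface by placing a copy of a fixed surface at each vertex and tubes along edges. Decorate the tubes according to the generator label, using distinct curve lengths for distinct generators so that orientation and labels are recorded. The free action of \(G\) on \(\Gamma\) then lifts to isometries, and the decorations force the full isometry group to be exactly \(G\). The main obstacle is matching the topology. One has to attach extra \(G\)-equivariant pieces to every vertex so that the resulting end space is homeomorphic to the given self-similar end space and the genus is infinite. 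Here self-similarity is essential: it allows us to absorb the ends of \(\Gamma\) (for instance a Cantor set, or a single end) into a copy of the given end space. I would first handle \(G\) finite and \(G=\Z\), then do the general case by gluing a self-similar ``filler'' at each vertex.

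For (2), the end space is doubly pointed, meaning there are exactly two points with finite \(\Isom\)-type orbit, or more precisely two distinguished maximal ends. Every isometry preserves this pair. Take a separating compact curve system between the two ends; this gives a \(\Z\)-like direction along which the isometries translate. Proper discontinuity then shows that the subgroup fixing both ends and acting trivially on the ``translation'' parameter is finite, so the group is finite-by-\(\Z\) up to index \(2\), hence virtually cyclic.

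For (3), let \(K\) be a compact non-displaceable subsurface. Every isometry \(g\) satisfies \(g(K)\cap K\neq\emptyset\), so the orbit of a point of \(K\) lies in a bounded neighborhood of \(K\). By proper discontinuity this orbit, and hence the group, is finite. For the converse, given a finite \(G\), realize \(G\) as an isometry group of a closed surface of large genus; such a \(G\)-action exists by Hurwitz-type constructions. Remove \(|G|\) discs in a free orbit and glue equivariantly copies of a rigidly decorated complement realizing the remaining topology of \(S\), so that the ends match. The hard step is again ensuring the existence of a \(G\)-invariant decomposition compatible with \(S\)'s end space, which uses non-displaceability to locate a compact core.
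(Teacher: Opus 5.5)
The paper gives no proof of this statement. It is quoted from \cite{APV_Isometry} purely as context, so treating it as a black box is exactly what the paper does. Your ``only if'' arguments are fine: in (1), countability of a discrete isometry group; in (3), finiteness because every isometry lies in the finite set $\{g : g(K)\cap K\neq\emptyset\}$.

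Your sketch of the existence directions, however, has a genuine gap at the step you call ``matching the topology'', and the gap is structural. In both constructions $G$ acts freely on the pieces carrying the ends: fillers at the vertices of a Cayley graph in (1), and $|G|$ copies of a complement glued along a free orbit of discs in (3). So the end space of the result splits into $|G|$ homeomorphic clopen pieces permuted freely by $G$. For infinite $G$ there are in addition the ends of $\Gamma$, which gluing fillers along circles can only add to, never merge. This is essentially the self-duplicating hypothesis of Theorem~\ref{thm:finitegroup}, whose proof is literally your construction for (3).

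Two examples show the construction cannot work as stated:
\begin{itemize}
\item The Loch Ness monster has self-similar end space. It cannot arise this way for any non-trivial finite $G$, since it has one end. Nor can it arise for $G=\Z$ with a finite generating set, since the thickened Cayley graph already has two ends.
\item A surface with exactly three non-planar ends contains a non-displaceable pair of pants. Yet a free orbit for $G=\Z/2\Z$ gives an even (possibly zero) or infinite number of ends, never three.
\end{itemize}
Attaching pieces at circles with non-trivial stabilizer does not rescue this in general. Such a stabilizer acts faithfully on the circle, so it is cyclic or dihedral. For $G=(\Z/2\Z)^3$, which is neither, every orbit of glued pieces therefore has even size. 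So neither example can be reached, although both carry structures with isometry group $(\Z/2\Z)^3$ by \cite{APV_Isometry}. Note also that non-displaceability plays no role in the existence direction.

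The missing idea is to let $G$ act \emph{trivially} on the ends. Take $X$ to be a regular $G$-cover of $S$ itself in which every neighbourhood of every end has connected preimage. This is the construction of \cite{APV_Isometry} reused in the proof of Theorem~\ref{thm:largefamilies-general}:
\begin{itemize}
\item cut $S$ along disjoint non-separating curves $\alpha^e_n$ accumulating at each end $e$ of a countable dense set $F\subset\Ends(S)$;
\item take copies $V_g$, $g\in G$, of the cut surface;
\item glue $\alpha^e_{n,+}$ of $V_h$ to $\alpha^e_{n,-}$ of $V_{hg_n}$, with every element of $G$ occurring infinitely often among the $g_n$ near each $e\in F$.
\end{itemize}
Since $F$ is dense and every end is non-planar, every neighbourhood of every end contains curves carrying all the monodromies, so all sheets are joined there. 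For finite $G$ the induced map $\Ends(X)\to\Ends(S)$ is then a continuous bijection of compact Hausdorff spaces. Hence $X\cong S$ for \emph{every} infinite-genus $S$ without planar ends, and it remains to rigidify the metric on $S$ so that every isometry of $X$ is a deck transformation.

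For infinite $G$ this is no longer sufficient: preimages of compact sets are non-compact, and distinct ends of $S$ can merge in $X$. Identifying $\Ends(X)$ with $\Ends(S)$ is where the structure of self-similar end spaces must genuinely be used, and your sketch does not address this.

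Finally, in (2) the ``translation parameter'' is never defined. You need an actual homomorphism from the end-fixing subgroup to $\Z$ before proper discontinuity can give you a finite kernel; one source is how the translates of a geodesic separating the two distinguished ends are nested.
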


All their constructions yield surfaces with non-discrete length spectrum, so one can ask which groups can appear as isometry groups if the hyperbolic structure has discrete length spectrum. We show that, for a class of surfaces, every finite group can appear:

\begin{theorintro}\label{thm:finitegroup}
Let \(S\) be an infinite-genus surface with self-duplicating endspace and \(G\) any finite group. Then there is a hyperbolic structure \(X\) on \(S\) with discrete length spectrum and \(\Isom(X)\simeq G\).
\end{theorintro}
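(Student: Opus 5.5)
The plan is to build $X$ as a regular $G$-cover-like structure obtained by gluing, in a $G$-equivariant way, copies of a ``core'' surface to a rigid central piece, and then to kill every extra isometry by making the lengths of curves pairwise distinct except along $G$-orbits.

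\textbf{Central piece.} First I fix a compact surface $C$ with boundary on which $G$ acts freely by orientation-preserving homeomorphisms. For example, take a Cayley graph of $G$ with respect to a finite generating set, thicken it, and attach a small genus-one handle at each vertex so the action is free. The boundary components come in free $G$-orbits. I give $C$ a $G$-invariant hyperbolic structure with geodesic boundary.

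\textbf{Surface and ends.} Next I use the self-duplicating hypothesis. If the end space $E$ (with the nonplanar ends, since $S$ has infinite genus) is self-duplicating, it is homeomorphic to $E\sqcup\cdots\sqcup E$ with $|G|\cdot k$ copies. Since $S$ has no planar-type restriction beyond what $E$ encodes, I choose one boundary orbit of $C$ and glue to each of its $|G|$ boundary curves a copy of a fixed infinite-type surface $S'$ with one boundary component and end space $E$. Here $S'$ must have infinite genus if $S$ does, and must match the planar/nonplanar end data. I cap the remaining boundary orbits by surfaces with no ends or simply choose $C$ with exactly one orbit of boundary. By classification of surfaces, the result is homeomorphic to $S$.

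\textbf{Metric.} On $S'$ I choose a hyperbolic structure with discrete length spectrum via Basmajian--Kim \cite{BK_Geometrically}. Concretely, I take a pants decomposition whose cuff lengths tend to infinity fast and are pairwise distinct and generic, and glue with twists making $S'$ have trivial isometry group. I place the same structure on each copy, glued $G$-equivariantly. Then $G\le\Isom(X)$ acts freely, and discreteness holds: only finitely many curves meet the compact part below any length, and each copy of $S'$ is discrete.

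\textbf{Rigidity: $\Isom(X)=G$.} This is the main obstacle. An isometry $\phi$ permutes the finitely many systole-type curves and, more robustly, preserves the set of short curves. I would make the pants curves of $C$ (a $G$-invariant pants decomposition) much shorter than everything in the copies of $S'$, and have all of them in distinct $G$-orbits with distinct lengths. Then $\phi$ preserves this collection and its lengths, so it maps the decomposition of $C$ to itself orbitwise, and composing with an element of $G$ one can assume $\phi$ fixes a chosen pants curve of $C$. Distinctness of lengths across orbits plus the free action then forces $\phi$ to fix every pants curve of $C$ setwise. Generic twists exclude orientation reversal and half-turns on the curves. Hence $\phi$ is the identity on $C$, so it maps each copy of $S'$ to itself, where it is trivial by the rigid choice. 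To carry this out I would quote the analogous rigidity argument of Aougab--Patel--Vlamis, adapted so that all lengths tend to infinity. The delicate point is ensuring the short-curve set is genuinely preserved: the shortest geodesics must be exactly those curves, which follows from the collar lemma once they are short enough.
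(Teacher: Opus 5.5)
Your construction matches the paper's in all essentials. Both use a compact core modelled on the Cayley graph of $G$ (the paper's $F$, built from $(2|G|+1)$-holed spheres $V_g$), with short pants curves whose lengths separate $G$-orbits. Both glue $|G|$ copies of a one-holed discrete-spectrum surface with the endspace of $S$ along a free orbit of boundary curves, use self-duplication for the ends, and use the collar lemma so that isometries preserve the short curves.

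Your discreteness argument is a different and cleaner route than the paper's, which works with orthogeodesic arcs in the $Z_g$ and Dehn twists about $\partial F$. In your version, closed geodesics of length at most $L$ meeting the compact core are finitely many, and every other one is a closed geodesic of one of finitely many copies of $S'$. That part is correct.

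\textbf{The gap is in the rigidity step.} Suppose $\phi$ preserves the short curves and their lengths, and after composing with some $g\in G$ it fixes $q_0$. You cannot conclude from this that $\phi$ fixes every pants curve. Lengths are constant on $G$-orbits, so they cannot tell apart the two pants on either side of $q_0$ when their boundary curves lie in the same orbits. Nor can they tell apart two boundary curves of one pant lying in the same orbit. Freeness of the $G$-action says nothing about isometries that are not in $G$. So a priori $\phi$ can exchange the two sides of $q_0$ and then permute curves non-trivially.

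The paper closes exactly this step by quoting Allcock's result that the Cayley-graph surface satisfies $\Isom(F)=G$. The Aougab--Patel--Vlamis argument you propose to adapt is not the relevant input, and nothing in the core has lengths tending to infinity. If you want a direct argument, you must build in more structure:
\begin{itemize}
\item $G$ acts freely on the pants;
\item each interior pant has its three boundary curves in three distinct $G$-orbits;
\item distinct $G$-orbits of pants have distinct triples of boundary orbits;
\item the twists are neither $0$ nor half a length.
\end{itemize}
Then for an interior pant $P_0$ you get $\phi(P_0)=gP_0$ for some $g\in G$. So $g^{-1}\phi$ preserves $P_0$ and each of its boundary curves, and is therefore the identity or the reflection on $P_0$. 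The twist condition excludes the reflection. An isometry that is the identity on an open set is the identity. This also makes your requirement that $S'$ have trivial isometry group unnecessary.

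There are two smaller points. First, a ribbon thickening of a Cayley graph need not have its boundary curves in free $G$-orbits. For $G=\Z/n$ with one generator you get an annulus whose two boundary circles are each $G$-invariant, and adding handles at vertices does not change this. Capping such circles $G$-equivariantly creates fixed points, which conflicts with your later use of freeness. Removing one disc from each vertex piece, as the paper does with the boundary $\gamma$ of $V$, gives exactly one free orbit of $|G|$ boundary curves. Second, your threshold for ``short'' must lie below the shortest orthogeodesic of $S'$ as well as below its systole, so that geodesics crossing $\partial C$ are long.
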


We say that \(S\) has \emph{self-duplicating endspace} if
\[(\Ends(S),\Ends_g(S))\simeq(\Ends(S)\times \{1,2\},\Ends_g(S)\times \{1,2\}),\]
that is, if the pair \((\Ends(S),\Ends_g(S))\) is homeomorphic to two copies of itself. An example is the \emph{blooming Cantor tree} surface, the surface without planar ends and whose endspace is a Cantor set.

If \(S\) has infinite genus, no planar ends and self-duplicating endspace, both Theorem \ref{thm:finitegroup} and Theorem \ref{thm:APV} apply. Such a surface can fall in different categories of Theorem \ref{thm:APV}: for instance, the blooming Cantor tree has self-similar endspace, so the two results show that the discreteness of the length spectrum assumption really changes the isometry groups that can arise. On the other hand, there are infinite-genus surfaces with no planar ends and self-duplicating endspace which admit a compact non-displaceable subsurface: for example, the surface without planar ends whose endspace is \(C_1\cup C_2\cup P\), where \(C_1\) and \(C_2\) are Cantor sets and \(P\) is a (countable) set of isolated ends whose closure is \(P\cup C_2\). For such surfaces, the discreteness assumption doesn't change the class of possible isometry groups. As mentioned, the constructions in \cite{APV_Isometry} all yield structures with non-discrete length spectrum, so Theorem \ref{thm:finitegroup} provides a new insight in the problem of understanding isometry groups of infinite-type surfaces also in this case.

We end this introduction with an open question. We have seen that Sunada's method doesn't allow us to construct infinite isospectral families of hyperbolic surfaces with discrete length spectrum. On the other hand, \cite{BK_Geometrically} showed that there is an abundance of surfaces with discrete length spectrum, so we ask: \textit{is there an alternative way to construct isospectral families, which yields infinite ones also in the discrete length spectrum case?}

\section*{Acknowledgements}

The first author acknowledges support of the ANR grants MAGIC (ANR-23-TERC-0007) and GALS (ANR-23-CE40-0001). The second author is funded by the ANR grant GALS (ANR-23-CE40-0001). Both authors thank Hugo Parlier for useful comments on the paper.

\section{Preliminaries}\label{sec:preliminaries}

We introduce here the basic topological, geometric and group-theoretic objects and notations needed in the rest of the paper.

The first convention is that \(\N\) represents the set of non-negative integers.

\subsection*{Topology}
By surface we mean a two-dimensional orientable connected manifold. We also assume that, if the boundary is non-empty, it is compact. As proved by Kerékjártó and Richards in \cite{Kerekjarto,Richards}, a surface \(S\) is topologically determined by its genus and number of boundary components --- which might be finite or infinite ---, and the pair of topological spaces \((\Ends(S), \Ends_g(S))\). Here, \(\Ends(S)\) is the \emph{space of ends} of the surface and \(\Ends_g(S)\) is the space of \emph{non-planar} ends (see Aramayona and Vlamis' survey \cite{AV_Survey} for the definitions).

A surface has \emph{self-similar endspace} (as first defined by Mann and Rafi in \cite{mr_large}) if for every decomposition
\[\Ends(S)=U_1\sqcup\dots\sqcup U_n\]
into pairwise disjoint clopen subsets \(U_i\), there is \(i\in\{1,\dots, n\}\) and an open subset \(A\subset U_i\) such that
\[(A,A\cap \Ends_g(S))\simeq (\Ends(S),\Ends_g(S)).\]

We say that an endspace is \emph{self-duplicating} if \((\Ends(S),\Ends_g(S))\) is homeomorphic to two copies of itself, i.e.\
\[(\Ends(S),\Ends_g(S))\simeq (\Ends(S)\times \{1,2\},\Ends_g(S)\times \{1,2\}).\]
Note that this can be easily seen to be equivalent to \((\Ends(S),\Ends_g(S))\) being homeomorphic to \(n\) copies of itself, for every \(n\geq 1\).

An endspace is \emph{doubly pointed} (see \cite{APV_Isometry}) if there are exactly two ends with finite homeomorphism group orbit.

Note that:
\begin{itemize}
\item if an endspace is self-similar or self-duplicating, it is not doubly pointed;
\item endspaces can be both self-similar and self-duplicating (e.g.\ in the case of a blooming Cantor tree), self-duplicating but not self-similar (e.g.\ if \(\Ends(S)=\Ends_g(S)=C_1\cup C_2\cup P\), where \(C_1\), \(C_2\) and \(P\) are pairwise disjoint, \(C_1\) and \(C_2\) are Cantor sets and \(P\) is a set of isolated points whose closure is \(C_2\cup P\)), or self-similar but not self-duplicating (e.g.\ if \(|\Ends(S)|=|\Ends_g(S)|=1\)).
\end{itemize}

\begin{figure}[h!]
	\label{fig:BloomCant}
	\centering
	\includegraphics[width=300pt]{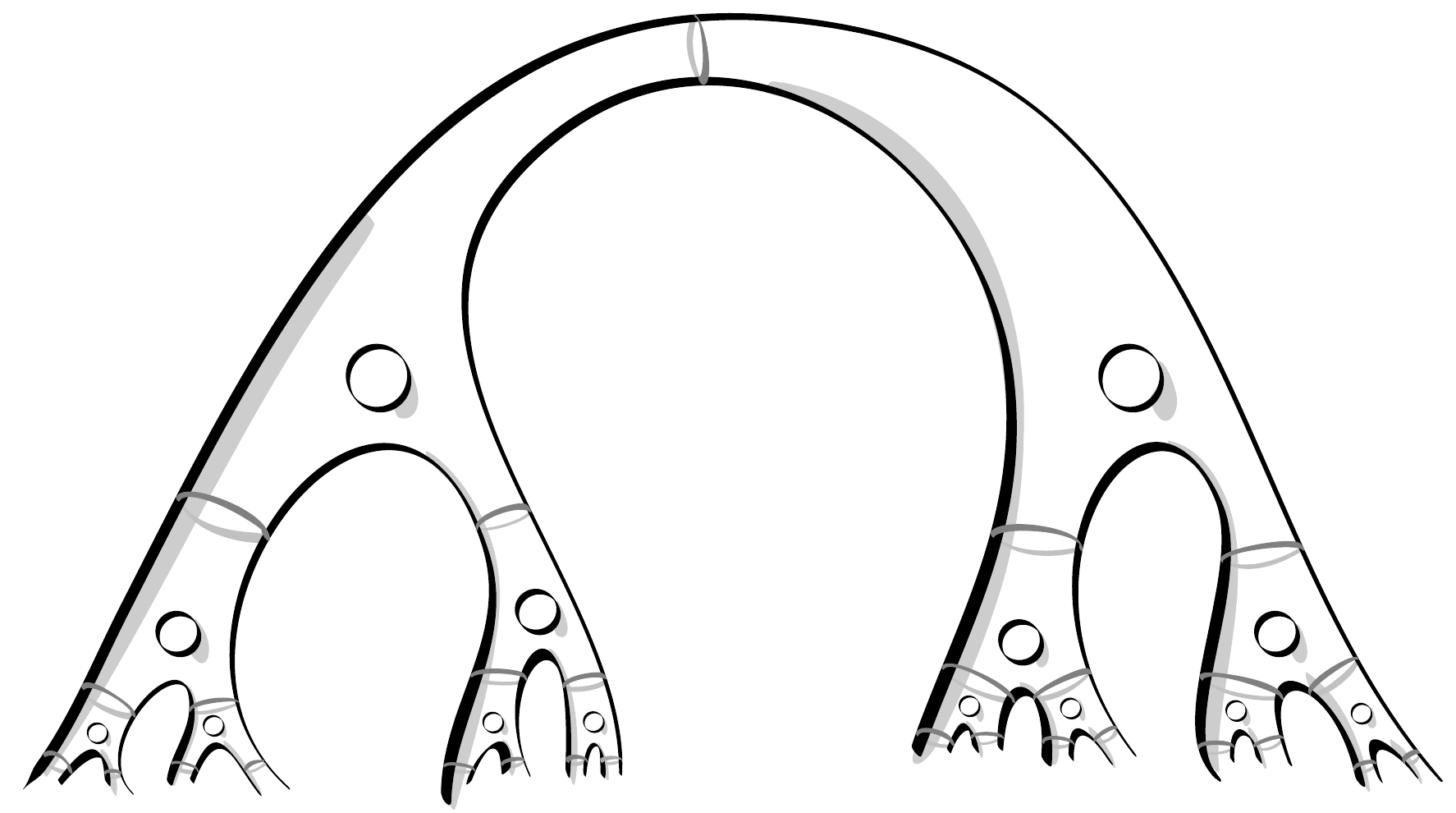}
	\caption{Blooming Cantor tree surface.}
\end{figure}

A compact subsurface \(K\) of a surface \(S\) is \emph{non-displaceable} if for every \(f\in\mathrm{Homeo}(S)\), \(f(K)\cap K\neq \emptyset\).

\subsection*{Geometry}

A \emph{hyperbolic structure} \(X\) on a surface \(S\) is a Riemannian metric of constant curvature \(-1\). We will assume such a metric to be complete and of the first kind, if \(\partial S=\emptyset\), otherwise we ask that each boundary component is totally geodesic and that the metric on the double is complete and of the first kind. We will also call \(X\) a \emph{hyperbolic surface}. Define the \emph{Teichmüller space} of a surface \(S\), denoted \(\Teich(S)\), to be the space of \emph{marked} hyperbolic structures on \(S\), i.e.\ the space of hyperbolic structures up to isotopy. The \emph{moduli space} of \(S\), denoted by \(\ms(S)\), is the space of hyperbolic structures on \(S\) up to isometry. Its universal cover is Teichm\"uller space.

Given a hyperbolic surface \(X\), define its \emph{length spectrum} \(\Ls(X)\) to be the multi-set of lengths of closed geodesics on the surface with multiplicity. We call two hyperbolic surfaces \emph{isospectral} if their length spectra coincide. Given a hyperbolic surface \(X\) with underlying topological surface \(S\), we denote by \(\I(X)\) the set
\[\I(X):=\{Y\in\ms(S)\st \Ls(Y)=\Ls(X)\}\]
and by \(\tilde{\I}(X)\) the lift of \(\I(X)\) to Teichmüller space.

In our constructions we will make repeated use of the so-called Collar Lemma (see e.g.\ \cite[Chapter 4]{buser_geometry}):

\begin{thm}[Collar Lemma]
Let \(X\) be a hyperbolic surface and \(\alpha\) a simple closed geodesic on it. Then \(X\) contains an embedded collar around \(alpha\) of width \(\arcsinh\left(\frac{1}{\sinh(\ell_\alpha(X)/2)}\right)\).
\end{thm}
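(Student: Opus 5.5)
The plan is the classical argument: argue by contradiction, reduce to a computation in a single pair of pants, and use right-angled hexagon and pentagon trigonometry. Write \(\ell=\ell_\alpha(X)\) and \(w=\arcsinh\left(1/\sinh(\ell/2)\right)\), so that \(\sinh(w)\sinh(\ell/2)=1\).

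First I reduce to the case without boundary. If \(\partial S\neq\emptyset\), pass to the double of \(X\), which by our convention is complete and of the first kind. A collar of width \(w\) around \(\alpha\) that is embedded in the double gives one in \(X\): either \(\alpha\) is a boundary component, and we keep a half-collar, or \(\alpha\) lies in the interior and the collar avoids the totally geodesic boundary by the same argument applied to \(\alpha\) and its mirror image.

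So assume \(\partial X=\emptyset\), and suppose the \(w\)-neighbourhood of \(\alpha\) is not an embedded annulus. Lifting to \(\mathbb{H}^2\), some other lift of \(\alpha\) comes within distance \(<2w\) of a fixed lift. Taking the common perpendicular of minimal length gives a geodesic arc \(\delta\). It has length \(2d<2w\), meets \(\alpha\) orthogonally at both endpoints, has interior disjoint from \(\alpha\), and is not homotopic into \(\alpha\) rel endpoints sliding on \(\alpha\). A regular neighbourhood of \(\alpha\cup\delta\) is one of two things, according to whether \(\delta\) leaves and returns on the same side of \(\alpha\) or on opposite sides:
\begin{itemize}
\item a pair of pants with \(\alpha\) as one boundary curve;
\item a one-holed torus containing \(\alpha\) as a non-separating curve.
\end{itemize}

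Next I realise the relevant pants geometrically. The boundary curves of this neighbourhood, other than \(\alpha\), are essential; if one were inessential, \(\delta\) could be homotoped into \(\alpha\). Each is therefore either homotopic to a closed geodesic or peripheral around a cusp. The first-kind assumption is what rules out funnels and half-planes, and this is the step I expect to need the most care on an infinite-type surface. It yields an embedded geodesic pair of pants \(P\) with totally geodesic boundary or cusps:
\begin{itemize}
\item in the first case, \(P\) has boundary \(\alpha,\gamma_1,\gamma_2\), where a boundary length \(0\) means a cusp;
\item in the second case, cutting the torus along \(\alpha\) gives \(P\) with boundary \(\alpha',\alpha'',\gamma\), where \(\alpha',\alpha''\) are the two copies of \(\alpha\).
\end{itemize}
By minimality, \(\delta\) is then the unique simple orthogeodesic in \(P\) realising the relevant distance.

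The proof concludes with trigonometry, separately in each case.
\begin{itemize}
\item \emph{Same side.} The arc \(\delta\) cuts \(\alpha\) into two arcs of lengths \(a_1+a_2=\ell\). Cutting \(P\) into its two right-angled hexagons, and then along the symmetry line, gives a right-angled pentagon with consecutive sides \(a_i/2\) and \(d\). The pentagon relation gives \(\sinh(a_i/2)\sinh(d)=\cosh(\text{opposite side})\geq 1\). Since \(a_i\leq\ell\), this yields \(\sinh(d)\geq 1/\sinh(\ell/2)=\sinh(w)\), contradicting \(d<w\). The cusp cases are the degenerate pentagons and satisfy the same inequality.
\item \emph{Opposite sides.} The hexagon formula for the distance \(D=2d\) between the two boundaries of length \(\ell\) gives
\[\cosh D=\frac{\cosh\ell_\gamma+\cosh^2(\ell/2)}{\sinh^2(\ell/2)}\geq\frac{1+\cosh^2(\ell/2)}{\sinh^2(\ell/2)}.\]
Hence \(\sinh^2(D/2)=(\cosh D-1)/2\geq 1/\sinh^2(\ell/2)\), i.e.\ \(d\geq w\), again a contradiction.
\end{itemize}

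Apart from the existence of the geodesic pants in the infinite-type setting, all steps are routine hyperbolic trigonometry.
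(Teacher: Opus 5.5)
The paper gives no proof of the Collar Lemma; it cites it from Buser's book (Chapter 4). So the comparison is with that standard proof. Your argument is correct in substance but organised differently. Buser extends the geodesic to a pants decomposition and works one Y-piece at a time. Using right-angled hexagon and pentagon trigonometry of the kind you use, he shows that the half-collars of width \(w\) about the three boundary geodesics of a Y-piece are embedded and pairwise disjoint; in particular two boundary geodesics are at distance at least \(w_1+w_2\). The collar of \(\alpha\) is then the union of two half-collars, and one gets the disjointness of collars of any family of disjoint simple closed geodesics at no extra cost. Your route, by contradiction via the shortest orthogeodesic, never chooses a pants decomposition: the minimal \(\delta\) tells you which single pair of pants or one-holed torus to examine, and you bound one quantity there. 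This sidesteps any discussion of geodesic pants decompositions on infinite-type surfaces. The price is that you only get the single-curve statement, so your reduction to empty boundary must rerun the argument for the pair \(\alpha,\alpha'\). Your opposite-sides hexagon bound does exactly this, since it concerns two boundary geodesics of equal length \(\ell\).

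A few points to tidy up.

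\emph{The step you flag as delicate is automatic.} The deck group is torsion-free, so each essential boundary curve of the neighbourhood \(N\) of \(\alpha\cup\delta\) is represented by a hyperbolic element (a closed geodesic) or a parabolic one (a cusp). This holds on any complete hyperbolic surface; first kind plays no role.

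\emph{\(P\) need not be embedded.} If \(\alpha\) bounds a one-holed torus on the side \(\delta\) enters, the other two boundary curves of \(N\) are homotopic, and \(P\) has boundary \(\alpha,\gamma,\gamma\). Do the trigonometry in the cover \(\mathbb{H}^2/\pi_1(N)\): its convex core is a geodesic pair of pants, which contains the lift of \(\delta\) by convexity. Nothing else changes.

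\emph{Simplicity of \(\delta\) deserves a line.} Suppose \(\delta(s)=\delta(t)\) with \(s<t\). Then the broken arc \(\delta|_{[0,s]}\cdot\delta|_{[t,2d]}\) has length \(<2d\) and joins two lifts of \(\alpha\). These lifts are distinct, because the two perpendiculars from the crossing point leave in different directions. This contradicts minimality.

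\emph{Hexagon formula.} The last term should be \(\cosh(\ell_\gamma/2)\), not \(\cosh\ell_\gamma\). This is harmless, since you only use that it is \(\geq 1\).

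\emph{Sharpness.} Equality \(d=w\) occurs for the once-punctured torus (\(\ell_\gamma=0\)). So your strict inequality is exactly what the argument gives: the open \(w\)-neighbourhood is embedded, and the closed one need not be.
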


A consequence is that any two simple closed geodesics of length at most \(\arcsinh(1)\) are disjoint.

We also remark the following:

\begin{lemma}\label{lem:quotient-discrete}
Let \(X\) be a hyperbolic surface and \(G\) a finite group of isometries without fixed points. If the length spectrum of \(X/G\) is discrete, the same holds for the length spectrum of \(X\).
\end{lemma}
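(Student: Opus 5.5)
The plan is to use the covering map \(\pi\colon X\to X/G\), which is a finite-sheeted Riemannian covering of degree \(|G|\) since \(G\) is finite and acts freely by isometries. The aim is to bound, for each \(L>0\), the number of closed geodesics on \(X\) of length at most \(L\) in terms of the number of closed geodesics on \(X/G\) of length at most \(L\). The latter number is finite by hypothesis.

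The key steps, in order, are as follows.

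\emph{Step 1.} Let \(\gamma\) be a closed geodesic on \(X\). Since \(\pi\) is a local isometry, \(\pi\circ\gamma\) is a closed geodesic on \(X/G\), possibly non-primitive even if \(\gamma\) is primitive. Parametrizing \(\gamma\) over its full length, \(\pi\circ\gamma\) is a closed geodesic of the same length \(\ell(\gamma)\). This closed curve is some power \(\delta^k\) of a primitive closed geodesic \(\delta\) on \(X/G\), so \(\ell(\delta)\le \ell(\gamma)\le L\).

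\emph{Step 2.} Count preimages. Fix a primitive closed geodesic \(\delta\) with \(\ell(\delta)\le L\) and an exponent \(k\) with \(k\,\ell(\delta)\le L\). The closed geodesics \(\gamma\) on \(X\) with \(\pi\circ\gamma=\delta^k\), considered as unparametrized oriented curves, are determined by a choice of lift of a base point of \(\delta\). There are at most \(|G|\) such lifts, by unique path lifting. Hence each pair \((\delta,k)\) contributes at most \(|G|\) geodesics on \(X\).

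\emph{Step 3.} Bound the exponents. By Step 1 the primitive geodesics \(\delta\) involved have \(\ell(\delta)\le L\), so only finitely many \(\delta\) occur, by discreteness of \(\Ls(X/G)\). Their lengths are bounded below by some \(m>0\) (the minimum over a finite set of positive numbers), so \(k\le L/m\). The total count is therefore finite, and \(\Ls(X)\) is discrete.

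The main obstacle is the bookkeeping of multiplicities and of non-primitive geodesics in Step 2: one must make sure that distinct closed geodesics upstairs are not undercounted into infinitely many via a fixed image. Unique path lifting from a chosen starting point settles this. Boundary components, if present, cause no trouble, since the same argument works on the doubles.
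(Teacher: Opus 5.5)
Your proof is correct and follows essentially the same route as the paper. Both arguments use that the projection $X\to X/G$ preserves lengths of closed geodesics and is at most $|G|$-to-one onto any given image; the paper phrases this as a contradiction with a sequence of distinct bounded-length geodesics, you as a direct count, and your bookkeeping of powers $\delta^k$ (bounding $k\le L/m$) makes explicit the case, which the paper passes over, where the projection of a primitive geodesic is non-primitive.
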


\begin{proof}
If the spectrum of \(X\) were not discrete, we could find a sequence of distinct closed geodesics \(\gamma_n\) of \(X\) with length uniformly bounded from above. Their projections in \(X/G\) will also have a uniform length upper bound, and for every \(n\), the projection of \(\gamma_n\) can coincide with the projection of at most \(|G|\) other \(\gamma_m\). So we also get infinitely many distinct closed geodesic on \(X/G\) with length bounded uniformly from above, i.e.\ \(\Ls(X/G)\) is not discrete either.
\end{proof}

\subsection{Group theory}
The main group-theoretic notion we will use is almost-conjugacy. Given a group \(G\) and two subgroups \(H_1,H_2\leq G\), we say that \(H_1\) and \(H_2\) are \emph{almost conjugate} if, for every \(g\in G\), 
\[
|C_g\cap H_1|=|C_g\cap H_2|,
\]  
where \(C_g\) denotes the conjugacy class of \(g\) in \(G\).

It is not hard to show that if \(H_1\) and \(H_2\) are almost conjugate subgroups of a group \(H\), then for any \(n\geq 1\) and any \(\sigma_1,\sigma_2:\{1,\dots, n\}\to\{1,2\}\), \(\prod_{i=1}^n H_{\sigma_1(i)}\) and \(\prod_{i=1}^n H_{\sigma_2(i)}\) are almost conjugate subgroups of \(H^n\).

We will use a specific triple \(H, H_1, H_2\), described in \cite[Example 11.2.2]{buser_geometry}: \(H=(\Z/8\Z)^*\ltimes \Z/8\Z\), where \((a,b)\cdot(a',b')=(aa',ab'+b)\), and the subgroups are
\[H_1=\{e=(1,0),h_1=(3,0),h_2=(5,0),(7,0)\}\]
and
\[H_2=\{(1,0),h_3=(3,4),h_4=(5,4),(7,0)\}.\]
Then \(H_1\) and \(H_2\) are almost conjugate, but not conjugate.

\begin{rmk}\label{rmk:groupstuff}Since $h_1$ and $h_2$ generate $H_1$, for every $h\in H$, $\{hh_1,hh_2\}\not\subseteq H_2h$. Similarly, for every $h\in H$, $\{hh_3,hh_4\}\not\subseteq H_1h$
\end{rmk}

\section{Finiteness of isospectral families given by Sunada's construction}

In this section we will prove Theorem \ref{thm:finitediscretesunada}, which we recall here:

\begin{thm:finitediscretesunada}
	Let \(X\) be a hyperbolic surface and \(G\leq\Isom^+(X)\) acting on it. Let \(\{H_i\}_{i\in I}\) be a family of almost conjugate subgroups of \(G\) without fixed points. Define a family of hyperbolic surfaces \(\{Y_i=X/H_i\}_{i\in I}\). If \(\Ls(Y_i)\) are discrete, then \(I\) is finite.
\end{thm:finitediscretesunada}

We will use two lemmas in the proof, the first one being purely group-theoretic. 

\begin{lemma}\label{lem:finacsubgroups}
	Let \(G\) be a group and \(\{H_i\}_{i\in I}\) be a family of almost conjugate subgroups of finite index. Then \(I\) is finite.
\end{lemma}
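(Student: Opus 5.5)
The strategy is to find a single finite-index normal subgroup \(N\trianglelefteq G\) contained in every \(H_i\). Once such an \(N\) exists, each \(H_i\) is the preimage of a subgroup of the finite group \(G/N\), and a finite group has only finitely many subgroups. Hence the family \(\{H_i\}_{i\in I}\), read as a family of distinct subgroups, must be finite.

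First I would extract the basic consequence of almost conjugacy. For every \(g\in G\), \(C_g\cap H_i\neq\emptyset\) if and only if \(C_g\cap H_j\neq\emptyset\). So the set
\[U:=\bigcup_{h\in H_i}C_h=\bigcup_{x\in G}xH_ix^{-1}\]
does not depend on \(i\). Fix \(i_0\in I\) and put \(n=[G:H_{i_0}]\). Then \(H_{i_0}\) has at most \(n\) conjugates, so every \(H_j\) lies in a union of at most \(n\) conjugates \(x_kH_{i_0}x_k^{-1}\). This writes \(H_j\) as a finite union of subgroups \(H_j\cap x_kH_{i_0}x_k^{-1}\). By B.H. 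Neumann's covering lemma, one of these has index at most \(n\) in \(H_j\). Exchanging the roles of \(j\) and \(i_0\), each \(H_j\) is commensurable with a conjugate of \(H_{i_0}\), with controlled index.

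Next I would take \(N\) to be the normal core of \(H_{i_0}\), which is normal of finite index, and try to show \(N\subseteq H_j\) for all \(j\). For \(x\in N\) we have \(C_x\subseteq N\subseteq H_{i_0}\), so \(|C_x\cap H_{i_0}|=|C_x|\), and almost conjugacy gives \(|C_x\cap H_j|=|C_x|\). When \(C_x\) is finite this forces \(C_x\subseteq H_j\), hence \(x\in H_j\). So every element of \(N\) with finite conjugacy class lies in every \(H_j\), and the argument finishes as in the first paragraph.

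The main obstacle is elements of \(N\) with infinite conjugacy class. For those, equality of infinite cardinalities does not give containment. My first attempt would be to use the Neumann step to bound \([G:H_j]\) uniformly, by at most \(n^2\). Then I would replace \(N\) by the intersection of the cores of the \(H_j\cap xH_{i_0}x^{-1}\), and argue that this intersection is still of finite index. The point is that each \(H_j\) is determined by its image in a fixed finite quotient, because \(U\) and these commensurability relations are preserved. If that fails, I would fall back on the setting where the lemma is applied. There \(G\) can be replaced by its image acting on a finite set of cosets, or by a finite group, since isometry groups with discrete length spectrum are finite by Basmajian--Kim. In that case all conjugacy classes are finite and the argument above closes directly.
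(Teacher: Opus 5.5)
Your reduction to a common finite-index normal subgroup \(N\subseteq H_i\) is the right one, and it is how the paper concludes: it takes \(K=\bigcap_{g\in G}gH_ig^{-1}\) and counts subgroups of the finite group \(G/K\). The gap is in producing \(N\). Your argument only shows that elements of \(\mathrm{core}(H_{i_0})\) with \emph{finite} conjugacy class lie in every \(H_j\). The rest of the plan does not handle the other elements:
the Neumann step gives nothing, since any two finite-index subgroups are commensurable. ``Exchanging roles'' would require covering \(H_{i_0}\) by boundedly many conjugates of \(H_j\), i.e.\ a bound on \([G:H_j]\), which is circular. And the claim that each \(H_j\) is determined by its image in a fixed finite quotient is the lemma itself. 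The fallback changes the statement. For finite \(G\) the lemma is trivial, and in the second case of the proof of Theorem~\ref{thm:finitediscretesunada} the group \(G\) need not be finite. Passing to \(G/\mathrm{core}(H_{i_0})\) presupposes that the other \(H_j\) contain that core. The paper gets the common core in one step from the standard fact it quotes: almost conjugate subgroups have equivalent permutation representations \(\rho_{H_i}\colon G\to\mathrm{Sym}(G/H_i)\), so the kernels \(\ker\rho_{H_i}=\mathrm{core}(H_i)\) all coincide.

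That said, your worry is well founded. The quoted fact is a statement about finite groups, where \(\#\mathrm{Fix}(g,G/H)=|Z_G(g)|\,|C_g\cap H|/|H|\) with \(Z_G(g)\) the centralizer. For infinite \(G\) with the cardinality definition it fails, and so does the lemma. If all nontrivial classes of \(G\) are infinite, then for finite-index subgroups the condition \(|C_g\cap H_i|=|C_g\cap H_j|\) says no more than your observation that \(U\) is independent of \(i\), and this is too weak. Concretely, let \(G\) be free on \(x_1,x_2,\dots\). For \(k\geq 3\) let \(\phi_k\colon G\to S_4\) send \(x_1\mapsto(12)\), \(x_2\mapsto(1234)\), \(x_k\mapsto(12)(34)\) and all other generators to \(e\), and let \(H_k=\phi_k^{-1}(\langle(12)(34)\rangle)\), which has index \(12\). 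The composite \(G\to S_4\to S_4/V_4\) does not depend on \(k\), so neither does \(M=\phi_k^{-1}(V_4)\). Since the conjugates of \(\langle(12)(34)\rangle\) cover \(V_4\), we get \(C_g\cap H_k\neq\emptyset\) iff \(g\in M\). Centralizers of nontrivial elements of \(G\) are cyclic, hence of infinite index. As \([G:H_k]<\infty\), each nontrivial \(y\in H_k\) therefore has infinitely many \(H_k\)-conjugates. So \(|C_g\cap H_k|\) is \(1\), \(\aleph_0\) or \(0\) according as \(g=e\), \(g\in M\setminus\{e\}\) or \(g\notin M\), independently of \(k\). But \(\mathrm{core}(H_k)=\ker\phi_k\), and \(x_k\in\ker\phi_j\setminus\ker\phi_k\) for \(j\neq k\), so the \(H_k\) are pairwise distinct. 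Moreover \(M\), of index \(6\), is almost conjugate to all of them. So no argument can fill your gap for the statement as written. What both proofs need is almost conjugacy in the sense of equal permutation characters \(g\mapsto\#\mathrm{Fix}(g,G/H_i)\). Evaluating at \(e\) gives equal indices, \(\ker\rho_{H_i}=\{g:\#\mathrm{Fix}(g,G/H_i)=[G:H_i]\}\) is then independent of \(i\), and your first paragraph finishes the proof. Alternatively, if every element of \(\mathrm{core}(H_{i_0})\) has a finite conjugacy class, your own argument already suffices.
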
 
\begin{proof}
	By definition, the subgroups are almost conjugate if, for every conjugacy class \(C\) of \(G\), then 
	\[
		|H_i\cap C|=|H_j\cap C|
	\]
	for all \(i,j\in I\). It is a standard fact (see e.g.\ \cite[Proposition 4.1C]{Gordon_Survey}) that their permutation representations 
	\begin{align*}
	\rho_{H_i}:\, &G\to \mathrm{Sym}(G/H_i)\\
	&g\mapsto (h\mapsto g\cdot h)	
	\end{align*}
	are pairwise conjugate for all \(i\in I\). Therefore, by elementary computations, their kernels coincide. Denote then \[K=\ker(\rho_{H_i})=\bigcap_{g\in G}gH_ig^{-1}\] for all $i\in I$. Note also that \(K\) is a normal subgroup of all \(H_i\)'s. By considering \(H_i/K\leq G/K\), it is straightforward that \([G:K]\leq [G:H_i]<\infty\). Since the subgroups of \(G\) containing \(K\) are in bijection with the subgroups of \(G/K\), which is finite, we conclude.
\end{proof}
\begin{lemma}\label{lem:finstab}
	Let \(X\) be a hyperbolic surface and \(G<\Isom^+(X)\) acting on it. Then,
	\[
		|\mathrm{Stab}_G(\gamma)|<\infty
	\]
	for all closed geodesics \(\gamma\).
\end{lemma}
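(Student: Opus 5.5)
Let \(\gamma\) be a closed geodesic on \(X\) and \(g\in\mathrm{Stab}_G(\gamma)\), meaning \(g(\gamma)=\gamma\) as a set. The plan is to show that \(\mathrm{Stab}_G(\gamma)\) is a discrete subgroup of a compact group, and hence finite.

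First, \(\Isom^+(X)\) acts properly discontinuously on \(X\). Since \(X\) is complete hyperbolic of the first kind (or, in the bordered case, its double is), \(\Isom(X)\) is identified with \(N(\Gamma)/\Gamma\), where \(X=\mathbb{H}^2/\Gamma\) and \(N(\Gamma)\) is the normalizer of \(\Gamma\) in \(\mathrm{PSL}_2(\mathbb{R})\). Because \(\Gamma\) is non-elementary (first kind), \(N(\Gamma)\) is discrete. It follows that \(\Isom(X)\) acts properly discontinuously on \(X\): for any compact \(K\subset X\), only finitely many isometries \(g\) satisfy \(gK\cap K\neq\emptyset\). Alternatively, one can argue directly: an isometry is determined by its value and derivative at a point, so isometries moving a point \(p\) a bounded distance form a compact subset of the frame bundle, and discreteness follows from \(\Gamma\) being non-elementary. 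This is the step I expect to be the main obstacle, since it requires the first-kind hypothesis; for an elementary surface such as a hyperbolic cylinder, the rotations would give an infinite stabilizer of the core geodesic.

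Second, apply this to \(K=\gamma\), which is compact. Every \(g\in\mathrm{Stab}_G(\gamma)\) satisfies \(g\gamma\cap\gamma=\gamma\neq\emptyset\). Proper discontinuity therefore gives only finitely many such \(g\), and so \(|\mathrm{Stab}_G(\gamma)|<\infty\).

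As a sanity check, the same conclusion can be reached concretely. The restriction map \(\mathrm{Stab}_G(\gamma)\to\Isom(\gamma)\), whose target is the dihedral-type group \(O(2)\) of the circle of length \(\ell_\gamma\), has kernel consisting of orientation-preserving isometries fixing \(\gamma\) pointwise. Such an isometry fixes a point and a tangent direction, hence also the normal direction by orientation, so it is the identity. The image is a discrete subgroup of the compact group \(O(2)\), discrete by proper discontinuity near \(\gamma\), and is therefore finite. Injectivity then yields the claim.
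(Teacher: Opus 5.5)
Your proof is correct and is essentially the paper's argument. The paper also observes that \(\mathrm{Stab}_G(\gamma)\subseteq\{f\in\Isom(X)\mid f(\gamma)\cap\gamma\neq\emptyset\}\) and concludes by proper discontinuity of the \(\Isom(X)\)-action, citing Aougab--Patel--Vlamis for it, whereas you additionally justify proper discontinuity via discreteness of the normalizer of the non-elementary group \(\Gamma\).
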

\begin{proof}
	 Note that \(\mathrm{Stab}_G(\gamma)\subseteq \{f\in\Isom(X)\mid f(\gamma)\cap \gamma\neq\emptyset\}\). Then the result follows from proper discontinuity of the action of \(\Isom(X)\) (see e.g.\ \cite[Lemma~2.6]{APV_Isometry}).
\end{proof}

\begin{proof}[Proof of Theorem \ref{thm:finitediscretesunada}]
	Assume first that \(\Ls(X)\) is discrete. By \cite[Prop~3.4]{BK_Geometrically}, then \(\Isom(X)\) is finite, and hence \([G:H_i]\) are also all finite. 
	In this case the result follows from Lemma~\ref{lem:finacsubgroups}.
	
	Assume now that \(\Ls(X)\) is not discrete. We will now prove that in this case \([G:H_i]\) are still all finite. It is enough to prove it for one of the \(H_i\)'s, since almost conjugate subgroups have the same index. Take \(H\) to be one of them for simplicity.
	
	Fix a closed geodesic \(\gamma\) on \(X\). Note that there is a natural correspondence between the orbit set of \(\gamma\), \(G\cdot \gamma\), and the quotient group \(G/\mathrm{Stab}_G(\gamma)\). We will define two quotients on the quotient set. First, 
	\[
		A_H(\gamma)=G\cdot \gamma /\sim\,, \text{ where }\alpha\sim \beta \text{ if }h(\alpha)=\beta\text{ for some }h\in H.
	\] 
	Note that \(A_H(\gamma)\) is finite for all \(\gamma\), because of \(\Ls(X)\) being non-discrete and \(\Ls(X/H)\) being discrete. Note now that \(A_H(\gamma)\) is also in bijection with the group \(G/\approx\), where \(g_1\approx g_2\) if \(h(g_1(\gamma))=g_2(\gamma)\) for some \(h\in H\). That is equivalent to \(g_1\in \mathrm{Stab}_G(\gamma)g_2 H\). There is a natural bijection between \(A_H(\gamma)\) and \(\mathrm{Stab}_G(\gamma)\backslash G/H\). Furthermore, by definition, 
	\[
		[G:H]= \sum_{\mathrm{Stab}_G(\gamma)g H\in\mathrm{Stab}_G(\gamma)\backslash G/H }[\mathrm{Stab}_G(\gamma):\mathrm{Stab}_G(\gamma)\cap gHg^{-1}].
	\]
	Note that, since \(A_H(\gamma)\) is finite, so is the index set of the sum, and that every term of the sum is finite because of Lemma~\ref{lem:finstab}. Hence, \([G:H]\) is finite, and therefore we can conclude again by Lemma~\ref{lem:finacsubgroups}.
\end{proof}

\section{Arbitrarily large isospectral families}
In this section we will prove Theorem \ref{thm:largefamilies}.

\begin{thm}\label{thm:largefamilies-general}
Let \(S\) be an infinite-genus surface without planar ends. For every \(n\in \N\) there is a hyperbolic structure \(X\) on \(S\) with discrete length spectrum and such that \(|\I(X)|\geq n\).
\end{thm}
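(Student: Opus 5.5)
We will apply Sunada's criterion (Theorem \ref{thm:sunada}) to the triple \(H\), \(H_1\), \(H_2\) of Buser's example, with \(X/H_1\) and \(X/H_2\) not isometric, and then use products \(H^n\) to get many almost conjugate, pairwise non-conjugate subgroups. Concretely, fix \(n\) and consider the subgroups \(K_\sigma=\prod_{i=1}^n H_{\sigma(i)}\leq H^n\) for \(\sigma:\{1,\dots,n\}\to\{1,2\}\). By the remark in the preliminaries they are pairwise almost conjugate. We will build a hyperbolic surface \(Z\) with a fixed-point-free action of \(H^n\) such that every quotient \(Z/K_\sigma\) is homeomorphic to \(S\) and has discrete length spectrum. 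We will then show that quotients for distinct \(\sigma\) up to permutation of coordinates (at least \(n+1\) classes, counted by the number of \(i\) with \(\sigma(i)=1\)) are pairwise non-isometric.

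\textbf{Construction.} Start with a Cayley-graph-type construction for \(H\), generated by \(h_1,h_3\) and suitable further elements. For each element of \(H\) take a copy of a compact ``tile'' \(T\): a genus-\(g\) surface with boundary, whose boundary curves are glued along the right multiplications by the generators, so that \(H\) acts freely by left multiplication. Then attach a copy of \(S\) minus a disk (or a piece carrying the ends of \(S\)) to the tile, equivariantly, so that the total end structure of each quotient is that of \(S\). Since \(S\) has no planar ends and infinite genus, we can split \(S\) as a compact core with one boundary curve glued to a ``remainder'' \(R\) with \(\Ends(R)=\Ends(S)\). The quotient by \(H_j\) has \(|H/H_j|=8\) copies of the glued-on piece, so we instead glue \(R\) to only one orbit position in a way that makes the quotient have exactly one copy. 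The alternative is to attach to each of the \(8\) cosets a piece whose endspace, taken together over the cosets, is homeomorphic to \(\Ends(S)\), for example eight copies of the piece whose end pairs partition \(\Ends(S)\) into eight clopen sets. Either way, Richards' classification gives that the quotient is homeomorphic to \(S\).

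\textbf{Discreteness.} We choose on \(R\) a structure with discrete length spectrum, which exists by Basmajian--Kim \cite{BK_Geometrically}; this can be arranged with pants curves whose lengths go to infinity. Each quotient is then a finite union of a compact piece and finitely many copies of such structures, hence has discrete length spectrum. To make this rigorous we plan to verify that a closed geodesic of bounded length can only enter a bounded region, using long boundary lengths and collars.

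\textbf{Non-isometry.} This step is the main obstacle. We will decorate the tile with a very short geodesic (length \(<\arcsinh(1)\)) placed asymmetrically. The short curves of the quotient then form a rigid combinatorial pattern that encodes the Schreier coset graph \(H/H_j\) with edges labeled by \(h_1,h_2,h_3,h_4\), with each label distinguished by different short lengths. An isometry preserves the short curves by the Collar Lemma, so it induces an isomorphism of labeled Schreier graphs. The result would then follow from Remark \ref{rmk:groupstuff}, which says that the \(H_1\)- and \(H_2\)-coset graphs are not label-isomorphic, since from any vertex the \(h_1,h_2\) edges behave differently. For the product, the Schreier graph of \(K_\sigma\) in \(H^n\) splits coordinatewise, so the multiset of factor types, namely the number of \(i\) with \(\sigma(i)=1\), is an isometry invariant. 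This gives \(n+1\) pairwise non-isometric isospectral structures, so \(|\I(X)|\geq n\).
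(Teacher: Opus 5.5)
\textbf{There is a genuine gap in the construction step.} As described, your construction does not make every quotient \(Z/K_\sigma\) homeomorphic to \(S\) for a general \(S\).

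If the end-carrying piece \(R\) (one boundary curve) is attached to each tile \(H^n\)-equivariantly, then \(H^n\) permutes the copies of \(R\) freely. So \(Z/K_\sigma\) contains \([H^n:K_\sigma]=8^n\) disjoint copies of \(R\), and \(\Ends(Z/K_\sigma)\) is \(8^n\) disjoint copies of \(\Ends(R)\). Neither of your fixes repairs this.
\begin{enumerate}
\item ``Gluing \(R\) to only one orbit position'' is incompatible with the action. An attached piece comes with its whole orbit. For only one copy to appear in \(Z\), its boundary circle would have to be \(H^n\)-invariant, but a finite group acting freely on a circle is cyclic, and \(H^n\) is not.
\item ``Attaching to the cosets pieces whose endspaces partition \(\Ends(S)\)'' fails for two reasons. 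All attached pieces are \(H^n\)-translates of one another, hence homeomorphic. Also, the cosets depend on \(\sigma\) while \(Z\) must not. You would therefore need \((\Ends(S),\Ends_g(S))\) to be homeomorphic to \(8^n\) copies of a single clopen piece. This is false, e.g., for the Loch Ness monster or the Jacob's ladder.
\end{enumerate}
So your argument only covers special endspaces, such as self-duplicating ones; this is exactly the closing remark of the section on isometry groups. The statement, however, is for all infinite-genus surfaces without planar ends.

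\textbf{The missing idea} is to make the big surface a regular \(G\)-cover (\(G=H^n\)) of \(S\) itself in which \(G\) acts \emph{trivially on ends}. The paper does this in three steps.
\begin{enumerate}
\item Cut \(S\) along infinitely many pairwise disjoint nonseparating curves \(\alpha^e_m\) accumulating to every end \(e\) of a countable dense set \(F\subset\Ends(S)\).
\item Take \(|G|\) copies \(V_h\) of the cut surface.
\item Reglue \(\partial(e,r,q,+)\) of \(V_h\) to \(\partial(e,r,q,-)\) of \(V_{hg_r}\), so that near each \(e\in F\) every element of \(G\) is used infinitely often.
\end{enumerate}
Then all copies are connected to one another near every end, and \(G\) fixes every end of \(X\). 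The continuous surjections \(\Ends(X)\to\Ends(X/K_i)\to\Ends(X/G)=\Ends(S)\) between compact Hausdorff spaces are therefore injective, hence homeomorphisms. Discreteness of every quotient then also comes for free: the argument of Lemma~\ref{lem:quotient-discrete} applies to finite covers of a discrete-spectrum structure on \(S\), so you do not need a separate collar argument.

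Your non-isometry strategy is in the same spirit as the paper's argument, which uses the short curves \(\mu_i,\alpha^{e_0}_i\) and Remark~\ref{rmk:groupstuff}. That strategy is short curves of distinct lengths encoding labeled Schreier coset graphs, which an isometry must preserve by the Collar Lemma. It can be transplanted to this cover, but it has to be run on a surface whose quotients are actually homeomorphic to \(S\).
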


\begin{proof}
Consider \(H=(\mathbb{Z}/8\mathbb{Z})^*\ltimes\mathbb{Z}/8\mathbb{Z} \), \(H_1=\{(1,0),h_1=(3,0),h_2=(5,0),(7,0)\}\) and \(H_2=\{(1,0),(3,4),(5,4),(7,0)\}\) as in Section \ref{sec:preliminaries}.

Given a surface \(S\) and a positive integer \(n\), let \(G\) be the product of \(n\) copies of \(H\) and denote by \(K_i\), for \(i=1,\dots, n\), the subgroup \(H_1\times\dots H_1\times H_2\times H_1 \times \dots \times H_1\), where \(H_2\) is at the \(i\)-th position. Fix a countable dense subset \(F\) of \(\Ends(S)\), and pick a collection of pairwise disjoint nonseparating curves \(\{\alpha^e_n\st n\in \N, e\in F\}\) such that for every \(e\in F\), \(\alpha^e_n\) converges to \(e\) as \(n\to\infty\). 

Up to possibly changing the collection of curves, we can assume that there are \(e_0\in F\) and simple closed curves \(\mu_0,\dots,\mu_N\), pairwise disjoint and disjoint from all the \(\alpha^e_n\), such that:
\begin{itemize}
\item for every \(i\leq N-1\), \(\mu_i\) cuts of a one-holed torus containing \(\alpha^{e_0}_i\);
\item \(\mu_N\) is disjoint from all the \(\mu_i\), \(i\leq N-1\), and \(\mu_0,\dots,\mu_N\) cobound an \(N\)-holed sphere.
\end{itemize}

\hfill 

\begin{figure}[h!]
	\label{fig:foot}
	\begin{overpic}[width=280pt]{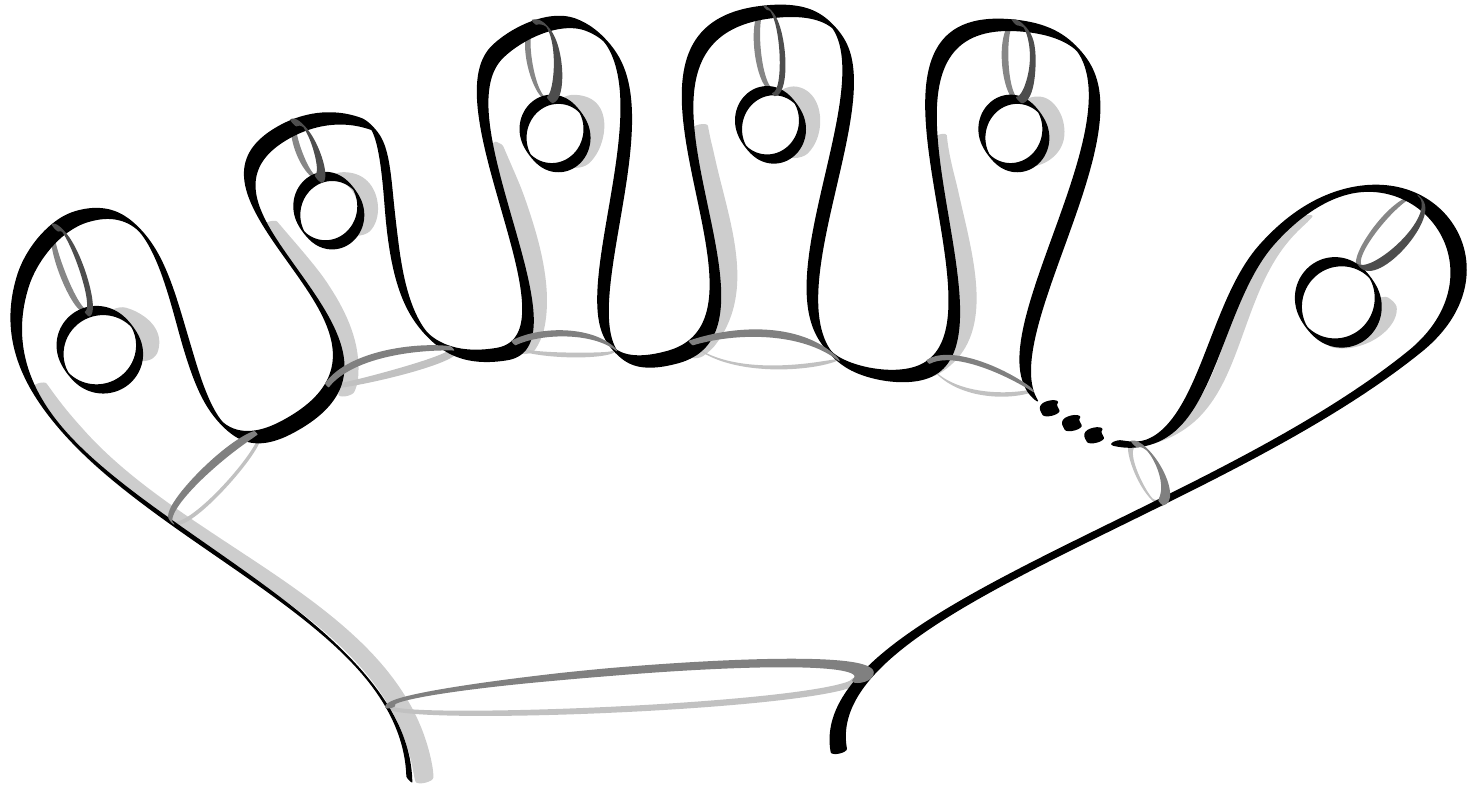}
		\put(2,40.5){$\alpha^{e_0}_0$}
		\put(18,47.5){$\alpha^{e_0}_1$}
		\put(35,54){$\alpha^{e_0}_2$}
		\put(51,54.5){$\alpha^{e_0}_3$}
		\put(68,53.5){$\alpha^{e_0}_4$}
		\put(95,41){$\alpha^{e_0}_{N-1}$}
		\put(15,19){$\mu_0$}
		\put(26,26){$\mu_1$}
		\put(37,27){$\mu_2$}
		\put(50,26.5){$\mu_3$}
		\put(64.5,25){$\mu_4$}
		\put(69,19.5){$\mu_{N-1}$}
		\put(41,10){$\mu_N$}
	\end{overpic}
	\centering
	\caption{Construction of the curves \(\alpha^{e_0}_i\) and \(\mu_i\).}
\end{figure}

Let \(V\) be the surface obtained by cutting \(S\) along all the \(\alpha^e_n\), and for every \(n,e\) we denote by \(\alpha^e_{n,\pm}\) the two boundary components of \(V\) corresponding to \(\alpha^e_{n}\).

Let \(G=\{g_0,\dots, g_{N-1}\}\), where \(N=32^n\) is the cardinality of \(G\). We rename the boundary components of \(V\) as \(\{\partial(e,r,q,\pm)\st e\in F,r\in\{0,\dots, N-1\}, q\in \N\}\), by setting
\[\partial(e,r,q,\pm):=\alpha^e_{qN+r,\pm}.\]

We build a surface \(X\) as follows: for every \(h\in G\), let \(V_h\) be a copy of \(V\). For every \(h\in G\), \(r\in \{0,\dots, N-1\}\), \(q\in \N\) and \(e\in F\), glue the boundary component \(\partial(e,r,q,+)\) of \(V_h\) to the boundary component \(\partial(e,r,q,-)\) of \(V_{hg_r}\). We consider the action of \(G\) on \(X\) obtained by rigidly permuting the subsurfaces \(V_h\). Our first claim is that \(X\) and \(X/K_i\), for any \(i\in\{1,\dots, n\}\), are homeomorphic to \(S\).

To prove the claim, note first that \(X\) and \(X/K_i\) are infinite-genus surfaces without boundary and with no planar ends. So to show that they are homeomorphic to \(S\) it is enough to prove that they all have homeomorphic spaces of ends.

Note that \(X/G\) is homeomorphic to \(S\). Moreover, for every \(i\in \{1,\dots n\}\), we have natural quotient maps
\[X\twoheadrightarrow X/K_i\twoheadrightarrow X/G\simeq S,\]
which induce surjective continuous maps
\[\Ends(X)\twoheadrightarrow \Ends(X/K_i)\twoheadrightarrow \Ends(S).\]
As all spaces of ends are compact and Hausdorff, to prove that the maps are homeomorphisms it is enough to show that they are injective. In particular, it is enough to prove that the map \(\Ends(X)\to\Ends(S)\) is injective. This holds because \(G\) acts trivially on the endspace of \(X\), which can be shown as in \cite[Lemma 3.5]{APV_Isometry}.

We endow \(S\) with a hyperbolic structure with the following properties:
\begin{enumerate}
\item the length spectrum is discrete;
\item there is \(\varepsilon<\arcsinh(1)\) such that all closed geodesics of \(V\) have length bigger than \(\varepsilon\), except for \(\{\mu_0,\dots,\mu_N\}\) and \(\{\alpha^{e_0}_0,\dots, \alpha^{e_0}_{N-1}\}\), whose lengths are all distinct and in \((0,\varepsilon)\).
\end{enumerate}
To construct such a structure, it is enough to start from a hyperbolic structure with discrete length spectrum (provided for instance by \cite{BK_Geometrically}), pick \(\varepsilon<\arcsinh(1)\) and smaller than the systole (which is possible by discreteness of the length spectrum), and then pinch \(\{\mu_0,\dots,\mu_N\}\) and \(\{\alpha^{e_0}_0,\dots, \alpha^{e_0}_{N-1}\}\) until they reach the correct length.

The metric on \(S\) induces a hyperbolic structure on \(V\), and on \(X\) by gluing copies of \(\alpha^e_{n,\pm}\) using the twist parameters of \(\alpha^e_n\) on \(V\). So \(G\) is realized as an isometry group of \(X\), without fixed points, and thus \(X\) and \(X/K_i\) all have discrete length spectrum by Lemma \ref{lem:quotient-discrete}. By Theorem \ref{thm:sunada}, the \(X/K_i\) are pairwise isospectral. We only need to show that they are not isometric.

For ease of notation, let us show that \(X/K_1\) and \(X/K_2\) are not isometric; the argument is analogous for any two quotients. Suppose for simplicity that \(g_0=(h_3,(1,0),\dots, (1,0))\) and \(g_1=(h_4,(1,0),\dots,(1,0))\). In \(X/K_1\), in the copy \(V_{K_1}\) of \(V\) corresponding to the coset \(K_1\), \(\partial(e_0,0,0,+)\) is glued to \(\partial(e_0,0,0,-)\) and \(\partial(e_0,0,1,+)\) is glued to \(\partial(e_0,0,1,-)\). So we can find a simple closed geodesic \(\gamma\subset V_{K_1}\) such that:
\begin{itemize}
\item it intersects the curve corresponding to \(\partial(e_0,0,0,+)\) once,
\item it intersects the curve corresponding to \(\partial(e_0,0,1,+)\) once,
\item it intersects \(\mu_0\) twice,
\item it intersects \(\mu_1\) twice,
\item it is disjoint from \(\mu_N\).
\end{itemize}
\begin{figure}[h!]
	\label{fig:isometric}
	\begin{overpic}[width=130pt]{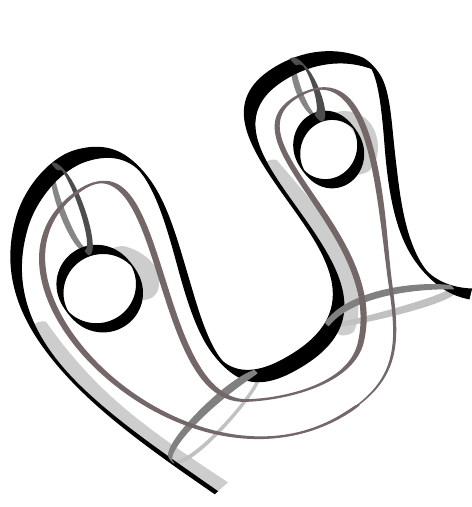}
		\put(-7,73){\(\partial(e_0,0,0,\pm)\)}
		\put(39,92){\(\partial(e_0,0,1,\pm)\)}
		\put(22,7){\(\mu_0\)}
		\put(56,38){\(\mu_1\)}
		\put(70,14){\(\gamma\)}
	\end{overpic}
	\centering
	\caption{Construction of the curve \(\gamma\).}
\end{figure}
If \(\varphi:X/K_1\to X/K_2\) is an isometry, the curve \(\varphi(\gamma)\) must intersect a copy of \(\partial(e_0,0,0,+)\) in some copy of \(V\) once, a copy of \(\partial(e_0,0,1,+)\) in some copy of \(V\) once, a copy of \(\mu_0\) in some copy of \(V\) twice, a copy of \(\mu_1\) in some copy of \(V\) twice and be disjoint from all copies of \(\mu_N\). One can check that this implies that there is some copy of \(V\) where  \(\partial(e_0,0,0,+)\) is glued to \(\partial(e_0,0,0,-)\) and \(\partial(e_0,0,1,+)\) is glued to \(\partial(e_0,0,1,-)\). In algebraic terms, this means that there is some \(g\in G\) such that
\[K_2gg_0=K_2g\]
and
\[K_2gg_1=K_2g.\]
Looking at the first component, this means that there is \(h\in H\) so that \(hh_3, hh_4 \in H_1h\), which is impossible (see Remark \ref{rmk:groupstuff}).
\end{proof}

\section{Realizing finite groups as isometry groups}

The goal of this section is to prove that all finite groups can be realized as isometry groups of hyperbolic structures with discrete length spectrum on a large class of surfaces:

\begin{thm:finitegroup}
Let \(S\) be an infinite-genus surface with self-duplicating endspace and \(G\) any finite group. Then there is a hyperbolic structure \(X\) on \(S\) with discrete length spectrum and \(\Isom(X)\simeq G\).
\end{thm:finitegroup}

\begin{rmk}
Note that every hyperbolic structure with discrete length spectrum has finite isometry group (see \cite[Proposition 3.4]{BK_Geometrically}), so our result completely classifies the possible isometry groups of hyperbolic structures with discrete length spectrum on an infinite-genus surface with self-duplicating endspace.
\end{rmk}

We will use the following result, which follows from the work of Basmajian and Kim \cite{BK_Geometrically}:

\begin{lemma}\label{lem:discrete-with-boundary}
Let \(S\) be a surface with a single boundary component and let \(\varepsilon>0\). Then there is a hyperbolic structure \(X\) on \(S\) with discrete length spectrum and such that the boundary component has length \(\varepsilon\).
\end{lemma}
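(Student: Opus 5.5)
The plan is to deduce the statement from the results of Basmajian and Kim \cite{BK_Geometrically}: they construct structures with discrete length spectrum from a pants decomposition whose cuff lengths grow fast enough towards the ends. We adapt this to allow one fixed short boundary. First, reduce to a surface without boundary by doubling: let \(DS\) be the double of \(S\) along its boundary curve \(\beta\). \(DS\) is again a surface (boundaryless, possibly infinite type), so by \cite{BK_Geometrically} it admits structures with discrete length spectrum. Rather than doubling an arbitrary structure, however, I would work directly with Fenchel--Nielsen coordinates. Choose a pants decomposition \(\mathcal{P}=\{\beta,\gamma_1,\gamma_2,\dots\}\) of \(S\) containing \(\beta\). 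Choose also an exhaustion of \(S\) by compact subsurfaces \(K_1\subset K_2\subset\cdots\), each a union of finitely many pants, with \(\beta\subset \partial K_1\).

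Next, assign lengths: \(\ell(\beta)=\varepsilon\), and lengths \(\ell(\gamma_j)\) on the other cuffs that tend to infinity as \(\gamma_j\) leaves every \(K_m\), growing fast enough. Twists are arbitrary, say zero. Completeness holds because the lengths are bounded below (cf.\ Basmajian--Šarić criteria), and the boundary \(\beta\) is geodesic of length \(\varepsilon\). To get discreteness, fix \(L>0\); I need to show that only finitely many closed geodesics have length at most \(L\). The key estimate, as in \cite{BK_Geometrically}, is the following. A closed geodesic crossing a cuff \(\gamma\) of length \(\ell\) must traverse the collar of width \(w(\ell)=\arcsinh(1/\sinh(\ell/2))\), which is small for large \(\ell\). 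So instead one uses the fact that a geodesic entering a pair of pants with all cuffs long must travel a distance comparable to the cuff lengths (the orthogeodesics between cuffs of length \(\ell_1,\ell_2,\ell_3\) are roughly \(\tfrac12(\ell_i+\ell_j-\ell_k)\) when all are large, and closed geodesics inside such a pants have length at least \(\min\ell_i\)). Hence choose the lengths increasing so fast outside \(K_m\) that any geodesic leaving \(K_m\) (i.e.\ meeting a pants of \(S\setminus K_m\)) has length greater than \(m\). Then geodesics of length at most \(L\) lie in \(K_{\lceil L\rceil}\), a compact surface with geodesic boundary. In that compact surface there are finitely many such geodesics.

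The main obstacle is making the lower bound uniform when neighboring pants have both long and short cuffs. It can appear near \(\beta\), or between pants in \(K_m\) and pants outside it. Here I would simply cite the quantitative statement in \cite{BK_Geometrically} that a geodesic passing through a pair of pants with at least two cuffs of length at least \(\ell\) has length bounded below by a function tending to infinity with \(\ell\). Arranging the exhaustion so that every pants outside \(K_m\) has at least two cuffs outside \(\partial K_m\) (possible after refining the decomposition) then gives the result. The short boundary \(\beta\) only affects pants inside \(K_1\), so it does not interfere.
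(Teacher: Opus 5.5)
Your overall plan, running the Basmajian--Kim construction with the boundary curve \(\beta\) as a cuff of length \(\varepsilon\), is the same as the paper's. The paper's proof is only the remark that the construction of \cite{BK_Geometrically} accommodates one geodesic boundary component of any length. The gap is in the mechanism you supply for discreteness, which rests on a false estimate.

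In a pair of pants with cuff lengths \(\ell_1,\ell_2,\ell_3\), the seam joining cuffs \(i\) and \(j\) has length \(d_{ij}\) with
\[\cosh d_{ij}=\frac{\cosh(\ell_i/2)\cosh(\ell_j/2)+\cosh(\ell_k/2)}{\sinh(\ell_i/2)\sinh(\ell_j/2)}.\]
So for large lengths \(d_{ij}\) is, up to bounded error, \(\max\{0,\tfrac12(\ell_k-\ell_i-\ell_j)\}\). The quantity \(\tfrac12(\ell_i+\ell_j-\ell_k)\) you quote is the length along which cuffs \(i\) and \(j\) run close to each other, not the distance between them. Hence when all cuffs are long and satisfy the triangle inequality, all three seams are exponentially short, and a geodesic can cross the pants almost for free.

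Here is a concrete failure. Take a one-holed torus with boundary \(\nu\) and interior curve \(\alpha\), \(\ell(\nu)=\ell(\alpha)=\ell\). Glue so that the two feet of the seam from \(\alpha\) to itself are identified; this is a legitimate choice if twists are arbitrary. The seam closes up to a closed geodesic of length \(d\) with \(\cosh d\approx 1+2e^{-\ell/2}\), i.e.\ \(d\approx 2e^{-\ell/4}\). Yet the only pair of pants it meets has all three cuffs of length \(\ell\). So the ``quantitative statement'' you intend to cite cannot hold as formulated, and the twists are not arbitrary.

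Your recipe (lengths increasing along the exhaustion, zero twists) can fail badly. A handle whose interior curve lies further out than its boundary has \(\ell(\alpha)-\ell(\nu)/2\to\infty\). Since \(\cosh d-1=(1+\cosh(\ell(\nu)/2))/\sinh^2(\ell(\alpha)/2)\), aligned seams then give geodesics of length tending to \(0\). The spectrum is then not even bounded away from zero.

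The completeness claim is also wrong. Gluing pants is automatically complete when cuff lengths are bounded \emph{above}, not below. Once the lengths tend to infinity, completeness (and being of the first kind, which the paper requires) depends on lengths and twists; this is the content of Basmajian--\v{S}ari\'c. For example, take a chain of pants \(P_n\) with cuffs \(\mu_{n-1},\mu_n,\nu_n\) in the triangle regime. If you align the seams from \(\mu_{n-1}\) to \(\mu_n\), they concatenate to a geodesic ray that leaves every compact set and has length \(\sum_n d_n\). This sum is finite for suitable lengths tending to infinity, so the surface is incomplete.

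What a construction of this type actually needs is control of the relative lengths inside each pair of pants and of the twists. The goal is that every closed geodesic reaching beyond \(K_m\) is forced through an arc whose length tends to infinity with \(m\). For instance, a return arc from a cuff \(c\) to itself in a pants with another cuff \(c'\) has length at least about \(\ell(c')-\ell(c)\), by the right-angled pentagon relation. So outer cuffs should exceed inner ones by amounts tending to infinity, the seams a returning geodesic must use should be long, and twists along nonseparating curves must make dual curves long. Some control of this kind is what the construction in \cite{BK_Geometrically} has to supply.

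To give a proof at the paper's level, invoke that construction itself and check that inserting \(\beta\) with length \(\varepsilon\) only alters a compact part of the surface. Do not derive discreteness from the estimate above.
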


\begin{proof}
It is enough to observe that the construction by Basmajian and Kim \cite{BK_Geometrically} can be easily adapted to include one geodesic boundary component, and that one can freely choose its length \(\varepsilon\). 
\end{proof}

\begin{rmk}\label{rmk:discreteorthospectrum}
The orthospectrum of any (finite- or infinite-type) hyperbolic surface with compact boundary is discrete. One way of seeing this is to double the surface along the boundary and use the fact that the multiset of length of geodesics intersecting a given compact set (in this case, what used to be the boundary) is discrete. 
\end{rmk}

\begin{proof}[Proof of Theorem \ref{thm:finitegroup}]
Fix \(S\) as in the statement, a finite group \(G\) and \(\varepsilon\in(0,\arcsinh(1))\). Let \(Z\) be a hyperbolic surface with a single boundary component of length \(\varepsilon\), infinite genus, endspace \((\Ends(Z),\Ends_g(Z))\simeq (\Ends(S),\Ends_g(S))\) and discrete length spectrum (which exists by Lemma \ref{lem:discrete-with-boundary}). By discreteness of \(\Ls(Z)\) and by Remark \ref{rmk:discreteorthospectrum}, there is \(\delta>0\) such that every closed geodesic and every orthogeodesic in \(Z\) has length bigger than \(\delta\). We will choose \(\delta<\varepsilon\).

Let \(V\) be a \((2|G|+1)\)-holed sphere, with boundary components denoted \(\{\gamma_g, \bar{\gamma}_g, \gamma\st g\in G\}\). Fix a hyperbolic structure on \(V\) by choosing a pants decomposition \(\mathcal{P}\), assigning length via an injective function \(\lambda:\mathcal{P}\cup \{\gamma_g, \bar{\gamma}_g\st g\in G\}\to (0,\delta)\), setting the length of \(\gamma\) to be \(\varepsilon\) and choosing all twist parameters to be zero.

We then construct a compact hyperbolic surface \(F\) modelled on the Cayley graph of \(G\) with \(G\) as generating set, using \(V\) as vertex surface, as in \cite{Allcock_Hyperbolic}. Precisely, for every \(g\) in \(G\), let \(V_g\) be a copy of \(V\). For any \(g,h\in G\), glue the \(\gamma_h\) boundary component of \(V_g\) to the \(\bar{\gamma}_h\) boundary component of \(V_{gh}\), with zero twist. As in \cite{Allcock_Hyperbolic}, the resulting surface has \(\Isom(F)=G\), where \(G\) acts by rigidly permuting the \(V_g\)'s. Denote by \(\mathcal{Q}\) the collection of curves in \(F\) obtained as union of all the curves in \(\mathcal{P}\) for every vertex surface \(V_g\). Note that these are exactly the simple closed geodesics in \(F\) of length at most \(\delta\) (by the Collar Lemma).

By construction, \(F\) has \(|G|\) boundary components, all of length \(\varepsilon\) (one per surface \(V_g\)). For every \(g\in G\), glue a copy \(Z_g\) of \(Z\) to the \(\gamma\) boundary component of \(V_g\),  with zero twist, to get a surface \(X\). 
\begin{figure}[h!]
	\label{fig:example}
	\begin{overpic}[width=280pt]{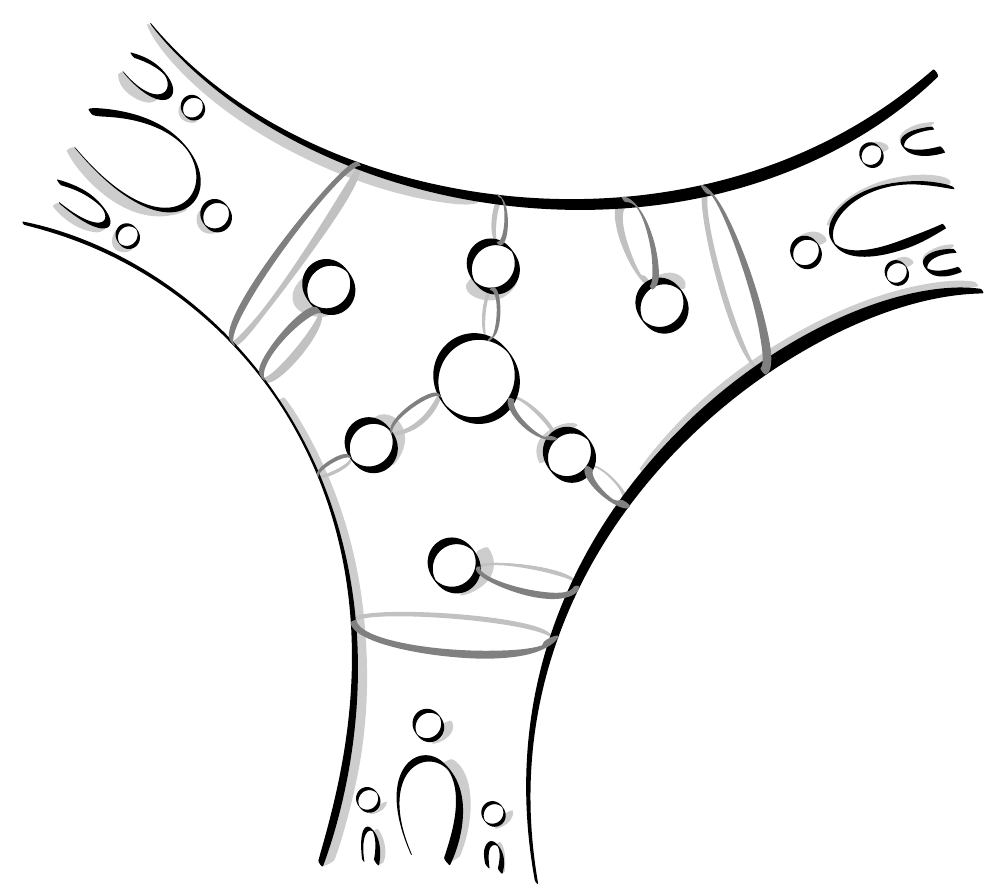}
		\put(37.5,55.5){$V_0$}
		\put(45,40){$V_1$}
		\put(57,54){$V_2$}
		\put(25.5,71){$Z_0$}
		\put(38,20){$Z_1$}
		\put(78,59){$Z_2$}
		\put(24.5,63){\footnotesize$\gamma$}
		\put(25,56){\footnotesize$\gamma_0$}
		\put(28,53){\footnotesize$\bar{\gamma}_0$}
		\put(39.5,50){\footnotesize$\bar\gamma_1$}
		\put(41.5,46.5){\footnotesize$\gamma_1$}
		\put(31,45){\footnotesize$\bar\gamma_2$}
		\put(33,41){\footnotesize$\gamma_2$}
		\put(46,58){\footnotesize$\gamma_1$}
		\put(51,57.5){\footnotesize$\bar{\gamma}_1$}
		\put(51.5,66.5){\footnotesize$\bar{\gamma}_2$}
		\put(46,67){\footnotesize$\gamma_2$}
	\end{overpic}
	\centering
	\caption{Construction of \(X\) for \(G=\mathbb{Z}/3\mathbb{Z}\) and \(S\) the blooming Cantor tree surface.}
\end{figure}

The action of \(G\) extends to an isometric action on \(X\) (by rigidly permuting the \(Z_g\)), so \(G<\Isom(X)\).

Note that the curves in \(\mathcal{Q}\) are the only simple closed gedesics in \(X\) of length at most \(\delta\): indeed, consider a simple closed geodesic \(\alpha\). If it is contained in \(F\), we have already shown that \(\alpha\in\mathcal{Q}\) if and only if its length is less than \(\delta\). If it is not contained in \(F\), there is a \(Z_g\) which either contains \(\alpha\) or such that \(\alpha\cap Z_g\) contains an arc from the boundary to the boundary. Since all closed geodesics and orthogeodesics of \(Z\) have length more than \(\delta\), \(\alpha\) must have length more than \(\delta\).

As a consequence, any isometry \(\varphi\) of \(X\) needs to send \(\mathcal{Q}\) to itself. If \(\varphi(F)\) were different from \(F\), there would be a boundary component of \(F\) sent to a curve intersecting the boundary of \(F\), which is impossible by the Collar Lemma. So \(\varphi\) restricts to an isometry of \(F\), and thus agrees on \(F\) with some \(g\in G\). As two isometries agreeing on an open set are the same map, \(\varphi=g\), and hence \(\Isom(X)=G\).

By construction \(X\) has infinite genus and its endspace is \(\sqcup_{g\in G}(\Ends(Z_g),\Ends_g(Z_g))\). As \(S\) has self-duplicating endspace, this is homeomorphic to  \((\Ends(S),\Ends_g(S))\). So \(X\) is homeomorphic to \(S\).

Finally, we want to show that \(\Ls(X)\) is discrete. By contradiction, suppose that \(\{\alpha_n\st n\in \N\}\) is a collection of closed geodesics of uniformly bounded length. By discreteness of \(\Ls(Z)\) and of \(\Ls(F)\), they cannot be all contained in any \(Z_g\) or in \(F\). If there is a \(Z_g\) and a subsequence \(n_j\) such that for every \(j\), \(\alpha_{n_j}\cap Z_g\) contains an arc \(a_j\) from the boundary to the boundary of \(Z_g\) and the \(a_j\) are pairwise not homotopic, by discreteness of the orthospectrum of \(Z\) the lengths of the \(a_j\) go to infinity, a contradiction. So the only possibility is that all the \(\alpha_n\) intersect the \(F\) and all the \(Z_g\) in the same arcs (up to homotopy relative to the boundary of \(F\) or \(Z_g\)). But then the \(\alpha_n\) differ by Dehn twists about the boundary components of \(F\), with larger and larger powers (in absolute value). In particular their lengths cannot be uniformly bounded, a contradiction.
\end{proof}

\begin{rmk}
Note that the construction in the theorem could be used to produce arbitrarily large isospectral families for surfaces with self-duplicating space of ends, by applying Sunada's criterion with the same groups used in the proof of Theorem \ref{thm:largefamilies}.
\end{rmk}

\section{Limits of isospectral families}

One of the reasons why discreteness of the length spectrum is an interesting assumption is that it is one of the crucial ingredient in Wolpert's proof of finiteness of isospectral families in the closed case \cite{wolpert_length}. The sketch of the proof is the following: given a sequence of isospectral surfaces in a moduli space, note that it sits in a compact set --- by Mumford's criterion \cite{mumford_remark}, since all surfaces in the sequence have the same systole. So we can extract a converging subsequence, and lift it to a converging subsequence in Teichm\"uller space. By discreteness of the length spectrum, and using the fact that the lengths of finitely many curves determine a point in Teichmüller space (see e.g.\ the \(9g-9\) theorem \cite[Theorem 10.7]{fm_primer}), Wolpert then shows that the sequence is eventually constant. In particular, any limit of isospectral surfaces is isospectral to all surfaces in the sequence.

As discussed, in the infinite-type setting infinite isospectral families exist, and indeed the main tools used by Wolpert (Mumford's criterion, the \(9g-9\) theorem and discreteness of the length spectrum) are missing in this setup. In attendance of possible future constructions of infinite families of discrete isospectral structures, if we do assume discreteness, we can try and see which partial results can be recovered. We end this article by showing that discreteness allows us to show that the limit of isospectral surfaces also has the same spectrum.

\begin{prop}\label{prop:limit-isospectral}
Let \(X\) be a hyperbolic surface with discrete length spectrum. Suppose \(Y_k\in\I(X)\) is a converging sequence, with limit \(Y\). Then \(Y\in\I(X)\).
\end{prop}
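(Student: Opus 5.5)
The plan is to compare counting functions. For a hyperbolic surface \(W\) and \(L>0\), let \(N_W(L)\) be the number of closed geodesics of \(W\) of length at most \(L\), counted with multiplicity. Two length spectra coincide as multisets as soon as the counting functions agree at all \(L\) outside a countable set. This is because both spectra will be discrete, so each counting function is a right-continuous step function determined by its values off its jump set. So it suffices to show that \(Y\) has discrete length spectrum and that \(N_Y(L)=N_X(L)\) for every \(L\notin\Ls(X)\cup\Ls(Y)\).

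First I will lift the sequence to \(\Teich(S)\), as in the sketch of Wolpert's argument above. I then fix the meaning of convergence as the standard one for infinite-type Teichmüller spaces: convergence in the (quasiconformal or Lipschitz) Teichmüller metric. This gives marking-compatible \(c_k\)-bi-Lipschitz homeomorphisms \(f_k:Y_k\to Y\) with \(c_k\to1\). Since a \(c\)-bi-Lipschitz map distorts lengths of free homotopy classes by at most a factor \(c\), for every closed curve \(\gamma\) we get
\[c_k^{-1}\ell_{Y_k}(\gamma)\le \ell_Y(f_k(\gamma))\le c_k\,\ell_{Y_k}(\gamma).\]
The map \(f_k\) induces a bijection between closed geodesics of \(Y_k\) and of \(Y\), preserving multiplicities. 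Using \(\Ls(Y_k)=\Ls(X)\), this yields
\[N_X(L/c_k)\le N_Y(L)\le N_X(c_kL).\]
Finiteness of \(N_X\) gives discreteness of \(\Ls(Y)\) immediately. Now fix \(L\notin\Ls(X)\). Discreteness of \(\Ls(X)\) gives \(\eta>0\) with \(N_X\) constant on \([L-\eta,L+\eta]\). Taking \(k\) large so that \(c_kL<L+\eta\) and \(L/c_k>L-\eta\), both bounds equal \(N_X(L)\), hence \(N_Y(L)=N_X(L)\), and we conclude.

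The main obstacle is the topology. If convergence only means convergence of Fenchel--Nielsen coordinates (equivalently, of each length function separately), the inequality \(N_Y(L)\le N_X(L+\eta)\) still holds. Indeed, any finite set of geodesics on \(Y\) of length \(<L\) has length \(<L+\eta\) on \(Y_k\) for large \(k\), and this also proves discreteness of \(\Ls(Y)\). The reverse inequality, however, can fail a priori, because curves of length \(\le L\) on \(Y_k\) might escape every compact set as \(k\to\infty\). In that setting I would first try the following. For each \(k\), the \(N_X(L)\) geodesics of length \(\le L\) form a finite set. By discreteness of \(\Ls(X)\), their lengths take finitely many values, so after passing to a subsequence I can assume their length profile is fixed. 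I would then show they stay in a fixed compact subsurface of \(S\), using the Collar Lemma and a thick--thin decomposition near the fixed systole to control where short curves sit. Once they are confined, there are only finitely many candidate homotopy classes, so a further subsequence makes these curves fixed classes, and pointwise convergence of lengths transfers them to \(Y\). If confinement cannot be proved, I would fall back on the bi-Lipschitz notion of convergence, where the argument above is complete.
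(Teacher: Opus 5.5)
Your argument is correct, and it takes a different route from the paper.

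\textbf{The paper's route.} The paper proves a stabilization lemma. For fixed \(L\), there is \(k_0\) such that every curve \(\gamma\) with \(\ell_Y(\gamma)\le L\) satisfies \(\ell_{Y_k}(\gamma)=\ell_Y(\gamma)\) for all \(k\ge k_0\), because lengths converge and \(\Ls(X)\) has gaps of definite size in \([0,2L]\). From this it deduces \(\Ls(Y)\subseteq\Ls(X)\) as sets. It then matches multiplicities by showing that the \(m\) curves realizing a value \(\ell\) on \(Y_k\) are eventually the same curves, and that they realize \(\ell\) on \(Y\).

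\textbf{Your route.} Your sandwich \(N_X(L/c_k)\le N_Y(L)\le N_X(c_kL)\) replaces all of this curve-tracking with one counting estimate. It gives discreteness of \(\Ls(Y)\) for free and handles multiplicities automatically.

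\textbf{What each buys.} The paper's approach yields a stronger, marked statement: the marked length function, restricted to curves of \(Y\)-length at most \(L\), is eventually constant along the lifted sequence. That is the kind of statement a Wolpert-type finiteness argument needs. Your count only sees the unmarked spectrum. On the other hand, you make explicit the hypothesis both proofs actually use. The paper's \(k_0\) is uniform over all curves with \(\ell_Y(\gamma)\le L\), and its claim that \(\ell_Y(\gamma_i^k)\le 2\ell\) for large \(k\) requires curves short on \(Y_k\) to stay short on \(Y\). That is exactly the uniform multiplicative length control you extract from metric convergence. As you observe, it can fail if convergence only means convergence of Fenchel--Nielsen coordinates.

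\textbf{A small correction.} Convergence in the quasiconformal Teichm\"uller metric gives \(K_k\)-quasiconformal maps with \(K_k\to1\), not directly bi-Lipschitz maps. Wolpert's inequality \(K_k^{-1}\ell_{Y_k}(\gamma)\le \ell_Y(f_k(\gamma))\le K_k\,\ell_{Y_k}(\gamma)\) gives exactly the distortion bound you use, so the argument goes through with \(c_k=K_k\).

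Your fallback sketch for the Fenchel--Nielsen topology (confinement via collars) is not carried out. It is not needed for the main argument.
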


To prove this, we will use the following lemma:

\begin{lemma}
Suppose \(\Ls(X)\) is discrete and \(Y_k\in\tilde{\I}(X)\) is a converging sequence, with limit \(Y\). Let \(L>0\). Then there is \(k_0\geq 1\) so that for every curve \(\gamma\) with \(\ell_Y(\gamma)\leq L\) and for every \(k\geq k_0\), \(\ell_{Y_k}(\gamma)=\ell_Y(\gamma)\).
\end{lemma}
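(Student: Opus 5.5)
Since all \(Y_k\) are isospectral to \(X\), every value \(\ell_{Y_k}(\gamma)\) lies in \(\Ls(X)\), and the plan is to combine this with discreteness and with continuity of length functions on Teichm\"uller space.

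First, set \(L' = L+1\) and consider the finite set \(\Ls(X)\cap[0,L']\), which is finite by discreteness. Choose \(\eta\in(0,1/2)\) smaller than half of the minimal gap between distinct elements of \(\Ls(X)\cap[0,L']\).

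Second, make the set of curves to control finite. Let \(\Gamma_L\) be the set of closed curves \(\gamma\) with \(\ell_Y(\gamma)\leq L\). Convergence \(Y_k\to Y\) in \(\Teich(S)\) should be taken in a sense making length functions converge uniformly in the multiplicative sense, meaning that there are \(K_k\to1\) with
\[
K_k^{-1}\ell_Y(\gamma)\leq\ell_{Y_k}(\gamma)\leq K_k\ell_Y(\gamma)
\]
for all curves \(\gamma\). This holds, for instance, when convergence is via quasiconformal or bi-Lipschitz markings with constants tending to \(1\), which is the standard topology used for infinite-type Teichm\"uller spaces. It may be possible that \(\Gamma_L\) is infinite, since we do not yet know \(\Ls(Y)\) is discrete. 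The uniform bound avoids the issue: it gives a single \(k_0\) such that, for \(k\geq k_0\) and all \(\gamma\in\Gamma_L\), we have \(|\ell_{Y_k}(\gamma)-\ell_Y(\gamma)|\leq (K_k-1)L'<\eta\) and \(\ell_{Y_k}(\gamma)\leq L'\). If the topology only gives pointwise convergence, I would first show that \(\Gamma_L\) is finite. The route would be: \(\ell_{Y_1}(\gamma)\leq K\ell_Y(\gamma)\leq KL\) on \(\Gamma_L\) for a fixed \(K\) (one quasiconformal comparison between \(Y_1\) and \(Y\)), and \(\Ls(Y_1)=\Ls(X)\) is discrete, so \(\Gamma_L\) is finite. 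Pointwise convergence then gives a uniform \(k_0\) over this finite set.

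Third, and this is the key step, for \(k,k'\geq k_0\) and \(\gamma\in\Gamma_L\), both \(\ell_{Y_k}(\gamma)\) and \(\ell_{Y_{k'}}(\gamma)\) lie in \(\Ls(X)\cap[0,L']\) and within \(2\eta\) of each other. By the choice of \(\eta\) they are equal, so \(\ell_{Y_k}(\gamma)\) is eventually constant in \(k\). Since it converges to \(\ell_Y(\gamma)\), that constant equals \(\ell_Y(\gamma)\), which proves the lemma.

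The main obstacle is the uniformity of \(k_0\) over all curves of \(Y\)-length at most \(L\). This needs the comparison of lengths under a single quasiconformal map, namely the Wolpert-type inequality \(\ell_{Y'}(\gamma)\leq K\ell_Y(\gamma)\) for \(K\)-quasiconformal markings, together with the correct choice of topology on \(\Teich(S)\) for infinite-type surfaces. I would settle this first by fixing the topology so that this inequality is available.
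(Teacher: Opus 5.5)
Your proposal is correct and follows essentially the same argument as the paper: discreteness of \(\Ls(X)\) gives a gap between its values, and convergence puts \(\ell_{Y_k}(\gamma)\in\Ls(X)\) in a window around \(\ell_Y(\gamma)\) containing at most one value of \(\Ls(X)\), so the sequence is eventually constant and equal to \(\ell_Y(\gamma)\). Your explicit handling of the uniformity of \(k_0\) over all curves of \(Y\)-length at most \(L\), via multiplicative bounds \(K_k\to 1\), makes precise a step the paper only asserts with ``since \(Y_k\) converges, there is \(k_0\)''.
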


\begin{proof} Fix \(L\); by discreteness, there is \(\varepsilon>0\), which we assume smaller than \(L\), such that every interval \(I\subset [0,2L]\) of length at most \(\varepsilon\) contains at most one value of \(\Ls(X)\).

Since \(Y_k\) converges, there is \(k_0\) so that for every \(k\geq k_0\), \(\ell_{Y_k}(\gamma)\in (\ell-\varepsilon/2,\ell+\varepsilon/2)\cap [0,2L]\), where \(\ell=\ell_Y(Y_k)\). But then by the previous observation \(\ell_{Y_k}(\gamma)\) is the unique element of \(\Ls(X)\) in the given interval, and thus the sequence is constant for \(k\geq k_0\). Since \(Y_k\to Y\), the constant can only be \(\ell_Y(\gamma)\). 
\end{proof}

\begin{proof}[Proof of Proposition \ref{prop:limit-isospectral}]
Lift the sequence \(Y_k\) and the limiting point \(Y\) to a sequence in \(\tilde{\I}(X)\) and limiting point in Teichmüller space. Abusing notation, we call the lifted surfaces \(Y_k\) and \(Y\).

Let \(\ell\in\Ls(Y)\). Then there is \(\gamma\) with \(\ell_Y(\gamma)=\ell\), and by the proposition \(\ell_{Y_k}(\gamma)\) is eventually constant and equal to \(\ell\). Thus \(\ell\in\Ls(Y_k)=\Ls(X)\) for every sufficiently large \(k\). In particular, the set of lengths (forgetting multiplicities) of closed geodesics in \(Y\) is contained in that of \(X\). To show isospectrality it is then enough to show that each \(\ell\in\Ls(X)\) appears in \(\Ls(Y)\) with the correct multiplicity.

Consider \(\ell\in \Ls(X)\), and suppose its multiplicity is \(m>0\). For every \(k\), there are exactly \(m\) curves \(\gamma_1^k,\dots,\gamma_m^k\) such that
\[\ell_{Y_k}(\gamma_i^k)=\ell\]
for every \(i\). Note that, for every sufficiently large \(k\), \(\ell_Y(\gamma_i^k)\leq 2\ell=:L\). Let \(k_0\) be given by the lemma; then for every \(k\geq 0\)
\[\ell_Y(\gamma_i^{k_0})=\ell_{Y_{k_0}}(\gamma_i^{k_0})=\ell_{Y_k}(\gamma_i^{k_0})=\ell\]
for every \(i=1,\dots, m\). So \(\ell\in \Ls(Y)\) with multiplicity at least \(m\), and moreover (up to renumbering)
\[\gamma_i^k=\gamma_i^{k_0}\]
for every \(k\geq k_0\). So \(\ell\in\Ls(Y)\) and has multiplicity at least \(m\).

Now suppose that \(\gamma\) is a curve with \(\ell_Y(\gamma)=\ell\). By the lemma, \(\ell_{Y_k}(\gamma)\) is eventually constant and equal to \(\ell_Y(\gamma)=\ell\). So \(\gamma\) is one of the \(\{\gamma_i^{k_0}\st i=1,\dots, m\}\). This implies that \(\ell\) has multiplicity \(m\) in \(\Ls(Y)\).
\end{proof}

\bibliographystyle{alpha}
\bibliography{references}

\end{document}